\newtheorem{theorem}{Theorem}
\newtheorem{definition}{Definition}
\newtheorem{example}{Example}
\newtheorem{lemma}{Lemma}
\newtheorem{proposition}{Proposition}
\newtheorem{remark}{Remark}
\newtheorem{problem}{Problem}
\newcommand{\abs}[1]{\left\lvert#1\right\rvert}
\newcommand{\sign}[1]{\mbox{sign}(#1)}
\newcommand{\sgn}[1]{\lfloor#1\rceil}
\newcommand{\real}[1]{\mbox{Re}(#1)}
\renewcommand{\exp}[1]{\mbox{exp}(#1)}
\def\Red#1{\textcolor{red}{#1}}
\journal{ArXiv}
\begin{document}

\begin{frontmatter}

\title{A redesign methodology generating predefined-time differentiators with bounded time-varying gains
\footnote{\Red{This is the preprint version of the accepted manuscript: Aldana-López R., Seeber R., Gómez-Gutiérrez D., Angulo M.T., Defoort M. ``A redesign methodology generating predefined-time differentiators with bounded time-varying gains". Int J
Robust Nonlinear Control. 2022; 1–16. DOI: 10.1002/rnc.6315. 
\textbf{Please cite the publisher's version}. For the publisher's version and full citation details see:
\url{https://doi.org/10.1002/rnc.6315}. This article may be used for non-commercial purposes in accordance with Wiley Terms and Conditions for Use of Self-Archived Versions. 
}}
}

\author[1]{Rodrigo Aldana-López}
\author[2]{Richard Seeber}
\author[3,4]{David Gómez-Gutiérrez}
\author[5]{Marco Tulio Angulo}
\author[6]{Michael Defoort}

\address[1]{Universidad de Zaragoza, Departamento de Informatica e Ingenieria de Sistemas (DIIS), María de Luna, s/n, 50018, Zaragoza, Spain.}

\address[2]{Graz University of Technology, Christian Doppler Laboratory for Model Based Control of Complex Test Bed Systems, Institute of Automation and Control, Graz, Austria.}

\address[3]{Intel Tecnología de M\'exico, Intel Labs, Av. del Bosque 1001, Colonia El Bajío, Zapopan, 45019, Jalisco, Mexico.}

\address[4]{Instituto Tecnológico José Mario Molina Pasquel y Henríquez, Unidad Académica Zapopan, Cam. Arenero 1101, Zapopan,  45019, Jalisco, Mexico.}

\address[5]{Universidad Nacional Autonoma de México, CONACyT - Institute of Mathematics, Boulevard Juriquilla 3001, Juriquilla, Queretaro, Mexico.}

\address[6]{Polytechnic University of Hauts-de-France, LAMIH, UMR CNRS 8201, INSA, Le Mont Houy, F - 59313, Valenciennes CEDEX 9, Valenciennes, France.}

\begin{abstract}
There is an increasing interest in designing differentiators, which converge exactly before a prespecified time regardless of the initial conditions, i.e., which are fixed-time convergent with a predefined Upper Bound of their Settling Time (\textit{UBST}), due to their ability to solve estimation and control problems with time constraints. However, for the class of signals with a known bound of their $(n+1)$-th time derivative, the existing design methodologies yield a very conservative \textit{UBST}, or result in gains that tend to infinity at the convergence time. Here, we introduce a new methodology based on time-varying gains to design arbitrary-order exact differentiators with a predefined \textit{UBST}. This \textit{UBST} is a priori set as one parameter of the algorithm. Our approach guarantees that the \textit{UBST} can be set arbitrarily tight, and we also provide sufficient conditions to obtain exact convergence while maintaining bounded time-varying gains. Additionally, we provide necessary and sufficient conditions such that our approach yields error dynamics with a uniformly Lyapunov stable equilibrium. Our results show how time-varying gains offer a general and flexible methodology to design algorithms with a predefined \textit{UBST}.
\end{abstract}

\begin{keyword}
Exact differentiators; finite-time stability; fixed-time stability; prescribed-time; unknown input observers.
\end{keyword}

\end{frontmatter}

\section{Introduction}

Real-time exact differentiators are instrumental algorithms for solving various estimation and control problems~\cite{Rios2018ASystems,Imine2011ObservationHOSM-observers,FerreiraDeLoza2015OutputIdentification,Navarro2012}. Recently, there is interest on extending their applicability to problems with time constraints~\cite{Gomez2015,Defoort2011}. In this paper, the goal is to design exact differentiators with uniform convergence despite the magnitude of the initial differentiation error, i.e., with fixed-time convergence, where the  \emph{Upper Bound for their Settling Time} (\textit{UBST}) can be set a priori by the user.
Although some methodologies have been proposed to construct exact differentiators with a predefined \textit{UBST}, several fundamental gaps remain.

On the one hand, state observer methodologies have been proposed in~\cite{Holloway2019,Menard2017}, which can be applied to the differentiation problem of signals whose $(n+1)$-th time derivative is precisely zero. The approach in~\cite{Holloway2019} is based on time-varying gains and converges precisely at the desired time. However, the resulting differentiator's gain diverges to infinity as the time approaches the user-defined \textit{UBST}. Such an unbounded gain is problematic under measurement noise or limited numerical precision. The approach in~\cite{Menard2017} is an autonomous differentiator based on homogeneity~\cite{Andrieu2008,Andrieu2009HomogeneityDesign}. However, its \textit{UBST} is greatly overestimated~\cite{Menard2017}. Recent works have been proposed to maintain the \textit{TVG} finite at the convergence time~\cite{R.Aldana-Lopez2021AGains,Orlov2022Prescribed-TimeGains}. However, the magnitude of the \textit{TVG} is still an unbounded function of the initial condition. An important property to be studied when working with time-varying systems is the uniform Lyapunov stability~\cite{Khalil2002NonlinearSystems}, as the absence of uniform stability has an inherent lack of robustness implications. However, such analysis is missing in the literature of prescribed-time observers and differentiators based on \textit{TVG}. 

On the other hand, for the broader class of signals whose $(n+1)$-th time derivative is bounded by a known constant, several differentiators have been proposed incorporating discontinuities to guarantee their exactness~\cite{Cruz-Zavala2011,Seeber2021Robust,Moreno2021arbitrary}. However, the results in~\cite{Cruz-Zavala2011,Seeber2021Robust} are limited to first-order derivatives. Additionally, the Lyapunov techniques used for their design in~\cite{Cruz-Zavala2011,Moreno2021arbitrary} often lead to very conservative estimates of the \textit{UBST} (e.g. a $130$-fold overestimate in \cite{Cruz-Zavala2011}). A conservative estimation of the \textit{UBST} results in an over-engineered predefined-time differentiator with a significant slack between the desired \textit{UBST} and the least one, which typically leads to larger than necessary differentiation errors.
Currently, there is no methodology to reduce such over-engineering in high-order differentiators.

In this paper, we fill the gaps above by introducing a novel methodology based on a class of time-varying gains to design arbitrary-order exact differentiators with fixed-time convergence and a predefined \textit{UBST} for the class of signals with a known bound on their $(n+1)$-th derivative.
Specifically, our methodology is based on a class of Time-Varying Gains (\textit{TVG}) that subsumes the one used in~\cite{Holloway2019}. However, we derive sufficient conditions to guarantee that the \textit{TVG} of the differentiator remain bounded.
This is in contrast to workarounds suggested in \cite{Holloway2019}, where the \textit{TVG} is maintained bounded at the cost of losing the exactness of the differentiator, producing errors at the prescribed time that are an unbounded function of the initial condition.
Furthermore, we prove that our methodology enables us to set the actual worst-case convergence time of the differentiator arbitrarily close to the desired \textit{UBST}. Since, the resulting differentiator is time-varying, we provide necessary and sufficient conditions for our methodology to yield an error dynamic with an uniformly Lyapunov stable equilibrium.

The rest of this paper is organized as follows. In Section~\ref{Sec:ProbStatement}, we introduce the predefined-time differentiation problem and present our Main Result. Section~\ref{Sec:Sim} presents numerical examples illustrating our contributions, in particular, comparing our differentiators with state-of-the-art algorithms. Finally,  Section~\ref{Sec:Conclu}  presents some concluding remarks. 
The proofs are collected in the Appendix.

\vspace{0.3cm}

\noindent \textbf{Notation:}

Let $\mathbb{R}_+=\{x\in\mathbb{R}\,:\,x\geq0\}$ and $\Bar{\mathbb{R}}_+=\mathbb{R}_+\cup\{\infty\}$. For $x\in\mathbb{R}$, $\sgn{x}^\alpha = |x|^\alpha \mbox{sign}(x)$, if $\alpha\neq0$ and $\sgn{x}^\alpha = \mbox{sign}(x)$ if $\alpha=0$. 
For a function $\phi:\mathcal{I}\to\mathcal{J}$, its reciprocal $\phi(\tau)^{-1}$, $\tau\in\mathcal{I}$,  is such that $\phi(\tau)^{-1}\phi(\tau)=1$ and its inverse function $\phi^{-1}(t)$, $t\in\mathcal{J}$, is such that $\phi(\phi^{-1}(t))=t$. For functions $\phi,\psi:\mathbb{R}\to\mathbb{R}$, $\phi\circ\psi(t)$ denotes the composition $\phi(\psi(t))$. We use boldface lower case letter for vector and boldface capital letters for matrices. Given a matrix $\mathbf{A}\in\mathbb{R}^{n\times m}$, $\mathbf{A}^T$ represents its transpose. Given a vector $\mathbf{v}\in\mathbb{R}^{n}$, $\|\mathbf{v}\|=\sqrt{\mathbf{v}
^T\mathbf{v}}$. We use the notation $\mathbf{b}_{i}\in\mathbb{R}^{(n+1)\times 1}$, $i\in\{0,\dots,n\}$, to denote a vector filled with zeros, except for the $(i+1)$-th component which is $1$. For a signal $y:\mathbb{R}_+\to\mathbb{R}$, $y^{(i)}(t)$ represents its $i-$th derivative with respect to time at time $t$. To denote a first-order derivative of $y(t)$, we simple use the notation $\dot{y}(t)$. The notation $\real{\lambda}$ denotes the real part of the complex number $\lambda$.

\section{Problem statement and Main Result}
\label{Sec:ProbStatement}

\subsection{Problem statement and preliminaries}
We consider time-varying differentiation algorithms written as the dynamical system
\begin{align}
\dot{z}_{i}&=-h_i(e_0,t;p)+z_{i+1}, \quad  i=0,\ldots,n-1, \\
\dot{z}_n&=-h_n(e_0,t;p), \label{Eq:Diff}
\end{align}
where $n>0$, $e_0(t)=z_0(t)-y(t)$, for some scalar signal $y(t)$. Above, $p$ is used to highlight some parameters of interest and $\{h_i\}_{i=0}^n$ are the \emph{correction functions} of the algorithm, which could be discontinuous in the first argument\footnote{In the spirit of Filippov's interpretation of differential equations, solutions of~\eqref{Eq:Diff} are understood as any absolutely continuous function that satisfies the differential inclusion obtained by applying the Filippov regularization to the right-hand side of~\eqref{Eq:Diff} (See \cite[Page 85]{Filippov1988DifferentialSides}), allowing us to consider a discontinuous in the first argument right-hand side of~\eqref{Eq:Diff}. In the usual Filippov's interpretation of the solutions of $\dot{x}=f(\mathbf{x},t)$, $\mathbf{x}\in\mathbb{R}^{n}$ it is assumed that $\|\mathbf{f}(\mathbf{x}, t)\|$ has an integrable majorant function of time for any $\mathbf{x}$, ensuring existence and uniqueness of solutions in forward time. However, in this work we deal with $\mathbf{f}(\mathbf{x}, t)$ for which no majorant function exist, but existence and uniqueness of solutions is still guaranteed by argument similar to \cite{aldana2019design}. In particular, existence of solutions follows directly from the equivalence of solutions to a well-posed Filippov system via the time-scale transformation.}.

Defining  $e_i(t)=z_i(t)-y^{(i)}(t)$, $i=0, \ldots, n$, the differentiation error dynamics %
is 
\begin{align}
\dot{e}_{i}&=-h_i(e_0,t;p)+e_{i+1}, \quad  i=0,\ldots,n-1, \\
\dot{e}_n&=-h_n(e_0,t;p)+d(t), \label{Eq:DiffErr}
\end{align}
where $d(t)=-y^{(n+1)}(t)$.
We let  $\mathbf{e}(t) := [e_0(t),\ldots,e_n(t)]^T$.

We denote as $\mathcal Y_{\mathcal{L}(t)}^{(n+1)}$ the class of all scalar signals $y(t)$ defined for $t \ge 0$ such that $\left|y^{(n+1)}(t)\right|\leq \mathcal{L}(t)$ for all $t \geq 0$, for some known function $\mathcal{L}(t) \geq 0$. When $\mathcal{L}(t)=L$ is constant, we simply write $\mathcal Y_{L}^{(n+1)}$. When  the correction functions $\{h_i\}_{i=0}^n$ are such that the origin of Eq. \eqref{Eq:DiffErr} is globally asymptotically stable~\cite{Khalil2002NonlinearSystems} for scalar signals $y$ of class $\mathcal{Y}^{(n+1)}_{\mathcal{L}(t)}$ and some $\mathcal{L}(t)$, then its \emph{settling time function} is
$$
T(\mathbf{e}(0))=\inf\left\{\xi \in \Bar{\mathbb{R}}_+: \lim_{\tau\to\xi}\sup_{t \ge \tau} \|\mathbf{e}\left(t;\mathbf{e}(0),y\right)\|=0 \quad \forall y \in \mathcal Y_{\mathcal{L}(t)}^{(n+1)} \right\},$$
where $\mathbf{e}(0)$  is the initial differentiation error. 
Here, $\mathbf{e}\left(t;\mathbf{e}(0),y\right)$ is the solution of Eq.~\eqref{Eq:DiffErr} starting at $\mathbf{e}(0)$ with signal $y\in \mathcal Y_{\mathcal{L}(t)}^{(n+1)}$. With some abuse of notation, we write $\mathbf{e}(t)=\mathbf{e}\left(t;\mathbf{e}(0),y\right)$ when there is no ambiguity.
Then, the origin of system~\eqref{Eq:DiffErr} is globally \textit{finite-time stable} if it is globally asymptotically stable and $T(\mathbf{e}(0))<\infty$. 
The origin of system~\eqref{Eq:DiffErr} is globally \textit{fixed-time stable} if it is globally finite-time stable and there exists $T_{{\max}}<\infty$ such that $T(\mathbf{e}(0))\leq T_{\max}$ for all $\mathbf{e}(0)\in\mathbb{R}^{n+1}$. Here,  $T_{\max}$ is an \emph{Upper Bound of the Settling Time} (\textit{UBST})~\cite{Sanchez-Torres2018,Jimenez2019}. We say that $T^*_{\max}$ is the least \textit{UBST}~\cite{Aldana-Lopez2018} of system~\eqref{Eq:DiffErr} if $$T^*_{\max}:=\sup_{\mathbf{e}(0)\in \mathbb{R}^{n+1}} T(\mathbf{e}(0)).$$
Finally, the origin of system~\eqref{Eq:DiffErr} is \emph{uniformly Lyapunov stable} if for every $\epsilon > 0$ there exists $\delta > 0$ such that for all $s \ge 0$, $||\mathbf{e}(s)|| \le \delta$ implies $||\mathbf{e}(t)|| \le \epsilon$ for all $t \ge s$.

With the above definitions, a differentiator is said to be \emph{asymptotic}, \emph{exact} or \emph{fixed-time} if the origin of its error dynamic is globally asymptotically stable, globally finite-time, or globally fixed-time stable, respectively. Moreover, a differentiator is said to be \emph{predefined-time} if it is fixed-time with a desired (predefined) \textit{UBST}. Additionally, we say that the differentiator is time-invariant if the correction functions $\{h_i\}_{i=0}^n$ are independent of $t$.

Our problem is:

\begin{problem}[Predefined-time $n$-th order exact differentiation]
\label{Problem} Given a desired convergence time $T_c > 0$ and any signal $y \in \mathcal Y_L^{(n+1)}$, $L\geq0$, obtain estimates $z_i(t)$ of the time derivatives $y^{(i)}(t)$, $i = 0, \cdots, n$, such that the identities $z_i(t) \equiv y^{(i)}(t)$  hold for all $t \geq T_c$. 
\end{problem}

To solve Problem~\ref{Problem}, below we provide a methodology to design the correction functions $\{h_i\}_{i=0}^n$ for the differentiator algorithm in Eq.~\eqref{Eq:Diff}, to obtain a predefined-time differentiator. To highligh that the differentiator is designed using the desired \textit{UBST} $T_c$ as a parameter, we write $h_i(e_0,t;T_c)$, $i=0,\ldots,n$.

\subsection{Main results}
\label{Sec:MainRes}

Let the class of functions $\mathcal Y_L^{(n+1)}$, $L\geq0$ and the desired convergence time $T_c > 0$ be given. Our main result is a method to ``redesign'' an existing time-invariant asymptotic differentiator to obtain a time-varying predefined-time differentiator with an \textit{UBST} given by $T_c$. We start with an asymptotic differentiator having the form:
\begin{align}
    \dot{z}_i&=-\phi_i(e_0)+z_{i+1}, \quad  i=0,\ldots,n-1,\\
    \dot{z}_n&=-\phi_n(e_0). \label{Eq:BaseDiff}
\end{align}
Here,  $\{\phi_i \}_{i=0}^n$ are its specific correction functions. We call algorithm~\eqref{Eq:BaseDiff} a base differentiator and $\{\phi_i \}_{i=0}^n$ \textit{admissible correction functions} if $\{\phi_i\}_{i=0}^n$ are continuous, except perhaps at the origin, and there exist an interval $\mathcal{I}_\phi\subseteq\mathbb{R}_+$ such that for any $\alpha\in \mathcal{I}_\phi$:
\begin{enumerate}
    \item[(A1)] \label{it:admissibility} the algorithm~\eqref{Eq:BaseDiff} is an asymptotic differentiator for the class $\mathcal{Y}_{\mathcal{L}(t)}^{(n+1)}$, where $$\mathcal{L}(t)=L\exp{-\alpha(n+1) t},$$ and there exists $\gamma(\mathbf{e}(0),\alpha)>0$ such that differentiation error vector $\mathbf{e}(t)$ in algorithm~\eqref{Eq:BaseDiff} for any $y\in\mathcal{Y}_{\mathcal{L}(t)}^{(n+1)}$ has an exponential convergence of the form:
    $$
    \|\mathbf{e}(t)\| < \gamma(\mathbf{e}(0),\alpha) \exp{-\alpha (n+1)t}
    $$
\end{enumerate}

Next, let $T_f \in \Bar{\mathbb{R}}_+$ denote a bound for the settling time function of the base differentiator for the class of signals $\mathcal{Y}_{\mathcal{L}(t)}^{(n+1)}$. If such bound does not exist or is unknown, we simply set $T_f = \infty$.

Lemma~\ref{Lem:Aux}, in the Appendix, shows that the correction functions as given in Table~\ref{Tab:phi} are admissible correction functions of asymptotic differentiators, and provides bounds for their convergence time. See also \cite{Andrieu2009HomogeneityDesign,Levant2003,Cruz-Zavala2011,Levant2019,Castillo2018Super-TwistingPerturbations} for additional examples.

\begin{table*}
    \centering
    \begin{tabular}{|c|p{5.6cm}|p{3.5cm}|p{5.5cm}|}
    \hline
     & {\bf Correction functions} & $\mathcal{I}_\phi$, $T_f$ & {\bf Design conditions} \\
     \hline
    \hline
    \textit{i)} &
    $\begin{array}{ll}\phi_i(w)=r^{i+1}l_iw \end{array}$ &  $\begin{array}{l}
    \mathcal{I}_\phi=\left[0,r\right)\\
    T_f=\infty
    \end{array}$ & 
    $s^{n+1}+l_0s^n+\cdots+l_{n}$ is a Hurwitz polynomial with roots $\lambda_i\in\mathbb{C}$ and $\max(\real{\lambda_0},\ldots,\real{\lambda_{n}})=-(n+1)$. $r>0$.
   \\
    \hline
    \textit{ii)} & $\begin{array}{ll}\phi_i(w)=l_iL^{\frac{i+1}{n+1}}\sgn{w}^{\frac{n-i}{n+1}}\end{array}$ & $\mathcal{I}_\phi=\mathbb{R}_+$, $T_f=\infty$  & $\{l_i\}_{i=0}^n$ are chosen as in~\cite{Levant2019,Cruz2018,Sanchez2016construction}. For instance, for $n=2$, $l_0=2.0$, $l_1=2.12$ and $l_2=1.1$~\cite{Levant2019}.\\
    \hline
    \textit{iii)} & $\begin{array}{l}\phi_0(w)=4\sqrt{L}(\sgn{w}^{\frac{1}{2}}+k\sgn{w}^{\frac{3}{2}})\\ \phi_1(w)=2L(\sign{w}+4k^2w+3k^4\sgn{w}^2)\end{array}$ &  $\mathcal{I}_\phi=\mathbb{R}_+$,  $T_f=T^*_{\max}$ &  Only if $n=1$. $k=\frac{9.8}{L^{\frac{1}{2}}T_{\max}^*}$ as in \cite{Seeber2020ExactBound}. \\
    \hline
    \textit{iv)} & $\begin{array}{ll}\phi_i(w)=\theta^{i+1}k_i\left(
    \sgn{w}^{(i+1)c-i}\right.+\left.\sgn{w}^{(i+1)b+i}\right)\end{array}$ & $\mathcal{I}_\phi=\mathbb{R}_+$, $T_f=\frac{4}{\theta}((1-c)^{-1} + (b-1)^{-1})$ & Only if $L=0$. $\theta\geq 1$,  $c\in(1-\epsilon,1), b\in(1,1+\epsilon)$, $\epsilon>0$ sufficiently small and $\{k_i\}_{i=0}^n$ chosen as in \cite{Menard2017}. \\
    \hline
    \end{tabular}
    \caption{Examples of admissible correction functions $\{\phi_i\}_{i=0}^n$ for the base differentiator algorithm \eqref{Eq:BaseDiff}.}
    \label{Tab:phi}
\end{table*} 

To introduce our main result, we make the following definitions based on the correction functions $\{\phi_i \}_{i=0}^n$ of the base differentiator. First, let $\alpha\in\mathcal{I}_\phi$ and define
$$
\mathbf{Q}: = \begin{bmatrix}
(\mathbf{U}-\alpha \mathbf{D})^{n} \mathbf{b}_{n}; &
\cdots; &
(\mathbf{U}-\alpha \mathbf{D}) \mathbf{b}_{n}; &
\mathbf{b}_{n}
\end{bmatrix},
$$
where $\mathbf{b}_{n}:=[0,\cdots,0,1]^T\in\mathbb{R}^{(n+1)\times 1}$, $\mathbf{D}:=\text{diag}\{0,\ldots,n\}$ and
$\mathbf{U}:=[u_{ij}]\in\mathbb{R}^{(n+ 1)\times (n+ 1)}$, with $u_{ij}=1$ if $j=i+1$ and $u_{ij}=0$ otherwise, i.e.,
\begin{equation*}
    \mathbf{b}_n = \begin{bmatrix}
    0 \\ \vdots \\ 0 \\ 1
    \end{bmatrix} \qquad
    \mathbf{D} = \begin{bmatrix}
    0 & 0 & \cdots & 0  \\
    0 & 1 & \cdots & 0  \\
    \vdots & \vdots & \ddots & \vdots \\
    0 & 0 & \cdots & n
    \end{bmatrix}
    \qquad \mathbf{U} = \begin{bmatrix}
        0 & 1 & \cdots & 0 \\
        \vdots & \ddots & \ddots & \vdots \\
        0 & \ddots & \ddots & 1 \\
        0 & 0 & \cdots & 0
    \end{bmatrix}.
\end{equation*}
Second, let $\Phi(e_0):=[\phi_0(e_0),\cdots,\phi_n(e_0)]^T$.
Third, define the function $\mathbf{f}: \mathbb R\rightarrow \mathbb R^{n+1}$, as 
$$
\mathbf{f}(e_0):=\beta\mathbf{Q}\Phi(\beta^{-1}e_0)+(\mathbf{U}-\alpha \mathbf{D})^{n+1} \mathbf{b}_{n}e_0.
$$ 
Here, $\beta\geq(\alpha T_c/\eta)^{n+1}$, with $\eta:=1-\exp{-\alpha T_f}$. %
 Finally, for $i = 0, \ldots, n$, define $g_i(e_0; L) :=
       l_i \max(L,\mu)^{\frac{i+1}{n+1}}\sgn{e_0}^{\frac{n-i}{n+1}} $
 where $l_i \in \mathbb{R}_+$ is chosen as in Levant's  arbitrary-order exact differentiator~\cite{Levant2003,Cruz2018,Sanchez2016construction} and $\mu>0$ is an arbitrary small positive constant.

Using the above notation, our main result is the following redesigned differentiator:
\begin{theorem}
\label{Th:MainResult} Given the class of functions $\mathcal Y_L^{(n+1)}$, $n>0$, $L \ge 0$, and admissible correction functions $\{\phi_i \}_{i=0}^n$. For a given non-negative $T_c$, a positive design parameter $\alpha\in\mathcal{I}_\phi$, and any $\mu > 0$, let functions $f(e_0)$ and $\{g_i\}_{i=0}^n$ be defined as above. Consider the following ``redesigned'' correction functions:
\begin{equation}
\label{Eq:hFunc}
    h_i(e_0,t;T_c):=\left\lbrace
    \begin{array}{ll}
        \kappa(t)^{i+1}f_i(e_0) & \text{for } t\in[0,T_c), \\
        g_i(e_0; L) & \text{otherwise,}
    \end{array}
    \right.
\end{equation}
where $f_i(e_0)$, $i=0,\ldots,n$, is the $(i+1)-$th row of $\mathbf{f}(e_0)$ and $\kappa(t)$ is a \textit{TVG} given by
\begin{equation}
\label{eq:kappa}
\kappa(t):=
\left\lbrace
\begin{array}{cc}
  \frac{\eta}{\alpha(T_c- \eta t)}  &  \text{for }t\in[0,T_c),\\
    1 & \text{otherwise}.
\end{array}
\right.
\end{equation}
Then, the differentiator of Eq. ~\eqref{Eq:Diff} is exact in $\mathcal Y_L^{(n+1)}$ and it converges before $T_c$. That is, with the above correction functions, the origin of system~\eqref{Eq:DiffErr} is fixed-time stable and $T_c$ is an \textit{UBST}. 
\end{theorem}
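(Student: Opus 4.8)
The plan is to prove Theorem~\ref{Th:MainResult} by a simultaneous change of time and state that turns the redesigned error dynamics \eqref{Eq:DiffErr} on $[0,T_c)$ into exactly the base differentiator's error dynamics \eqref{Eq:BaseDiff} running on a stretched time axis, and then to invoke the admissibility property (A1). The interval $[T_c,\infty)$ is handled first and separately: there $h_i(e_0,t;T_c)=g_i(e_0;L)$, i.e.\ Levant's exact--differentiator correction functions with gain parameter $\max(L,\mu)\ge L$, and since $|d(t)|=|y^{(n+1)}(t)|\le L$, the origin $e=0$ is a (finite-time stable, in particular invariant) equilibrium of the corresponding Filippov inclusion. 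Consequently, once I show that the solution on $[0,T_c)$ is continuous up to $t=T_c$ with $\lim_{t\to T_c^-}e(t)=0$, it follows that $e(t)=0$ for all $t\ge T_c$, which is exactness and shows $T_c$ is an UBST.

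On $[0,T_c)$, introduce the time change $\tau=\rho(t):=\tfrac1\alpha\ln\tfrac{T_c}{T_c-\eta t}$, a strictly increasing bijection of $[0,T_c)$ onto $[0,T_f)$ (with $T_f=\infty$ when $\eta=1$), satisfying $\dot\tau=\kappa(t)$, $\dot\kappa=\alpha\kappa^2$, hence $\kappa(\tau)=\kappa(0)\,\mathrm{e}^{\alpha\tau}$ with $\kappa(0)=\eta/(\alpha T_c)$, $\rho(T_c)=T_f$, and $\mathrm{e}^{-\alpha(n+1)\tau}=(\eta/(\alpha T_c))^{n+1}\kappa^{-(n+1)}\ge\beta^{-1}\kappa^{-(n+1)}$ by the choice $\beta\ge(\alpha T_c/\eta)^{n+1}$. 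Then apply the invertible, time-dependent state change $e(t)=\beta\,\kappa(t)^{\mathcal D}\,\mathcal Q\,\bar e(\tau)$, with $\kappa^{\mathcal D}=\mathrm{diag}(1,\kappa,\dots,\kappa^{n})$. A direct computation --- using $\kappa^{-\mathcal D}\mathcal U\kappa^{\mathcal D}=\kappa\,\mathcal U$, the identities just listed, the fact that the first row of $\mathcal Q$ equals $B_1^{T}$ (so $e_0=\beta\bar e_0$ and $\Phi(\beta^{-1}e_0)=\Phi(\bar e_0)$) and that its last column is $B_{n+1}$ (so $\mathcal Q^{-1}B_{n+1}=B_{n+1}$), and the companion-type identity $\mathcal Q^{-1}(\mathcal U-\alpha\mathcal D)\mathcal Q=\mathcal U+\big(\mathcal Q^{-1}(\mathcal U-\alpha\mathcal D)^{n+1}B_{n+1}\big)B_1^{T}$ --- shows that $\bar e(\tau)$ satisfies exactly $\tfrac{d\bar e}{d\tau}=\mathcal U\bar e-\Phi(\bar e_0)+B_{n+1}\bar d(\tau)$, i.e.\ the base error dynamics, with $\bar d(\tau):=\beta^{-1}\kappa(\tau)^{-(n+1)}d(t(\tau))$; the two $\bar e_0$-linear drift terms generated by the rescaling cancel precisely because of the $(\mathcal U-\alpha\mathcal D)^{n+1}B_{n+1}e_0$ term built into $f$.

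Since $|\bar d(\tau)|\le\beta^{-1}\kappa^{-(n+1)}L\le L\,\mathrm{e}^{-\alpha(n+1)\tau}=\mathcal L(\tau)$, the signal producing $\bar d$ lies in $\mathcal Y_{\mathcal L(\tau)}^{(n+1)}$ with $\mathcal L(\tau)=L\,\mathrm{e}^{-\alpha(n+1)\tau}$, so, as $\alpha\in\mathcal I_\phi$, property (A1) gives $\|\bar e(\tau)\|<\gamma(\bar e(0),\alpha)\,\mathrm{e}^{-\alpha(n+1)\tau}$. Transforming back, $\|e(t)\|\le\beta\|\mathcal Q\|\,\kappa(t)^{n}\|\bar e(\tau)\|<\beta\|\mathcal Q\|\,\gamma(\bar e(0),\alpha)\,\kappa(0)^{n}\,\mathrm{e}^{-\alpha\tau}$, which tends to $0$ as $\tau\to\infty$ when $T_f=\infty$; when $T_f<\infty$, $T_f$ bounds the settling time of the base differentiator, so $\bar e\equiv0$ on a left-neighbourhood of $\tau=T_f$, hence $e\equiv0$ on a left-neighbourhood of $t=T_c$. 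In both cases $\lim_{t\to T_c^-}e(t)=0$, and the displayed bound also shows $e$ is continuous and bounded on $[0,T_c]$, completing the reduction of the first paragraph: the differentiator is exact in $\mathcal Y_L^{(n+1)}$ and $T(e(0))\le T_c$ for every $e(0)$ and every $y\in\mathcal Y_L^{(n+1)}$, so $T_c$ is an UBST. Global attractivity is contained in this, and global asymptotic --- hence fixed-time --- stability of the origin follows by transporting the base differentiator's stability through the (locally bounded) transformation on $[0,T_c)$ together with the finite-time stability of the post-$T_c$ dynamics.

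The step I expect to be the main obstacle is the collapse in the second paragraph: arranging the two changes of variables so that the time-varying, spatially discontinuous right-hand side on $[0,T_c)$ reduces to \eqref{Eq:BaseDiff} with no residual drift. This hinges on the linear-algebra identities for $\mathcal Q$, $\mathcal U$ and $\mathcal D$ --- in particular the rank-one companion-form identity that forces the spurious $\bar e_0$-linear terms to cancel against the correction in $f$ --- and on the Filippov bookkeeping: verifying that the inclusion, discontinuous only on $\{e_0=0\}=\{\bar e_0=0\}$, is preserved by the invertible state change, that the merely bounded/measurable $d$ transforms consistently, and that the solution is defined on all of $[0,T_c)$ with no finite escape time; together with the case split ($T_f<\infty$ versus $T_f=\infty$) in the back-transformation as $t\to T_c^-$.
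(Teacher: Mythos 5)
Your proposal is correct and follows essentially the same route as the paper: the time change $\tau=\varphi(t)=-\alpha^{-1}\ln(1-\eta t/T_c)$ together with the state change $e(t)=\beta\,\Lambda(t)\,\mathcal{Q}\,\chi(\tau)$ (your $\kappa^{\mathcal D}$ is the paper's $\Lambda$), the cancellation via $\mathcal{Q}^{-1}(\mathcal{U}-\alpha\mathcal{D})\mathcal{Q}=\mathcal{U}-\mathcal{A}$, the bound $|\pi(\tau)|\le L\exp{-\alpha(n+1)\tau}$ from $\beta\ge(\alpha T_c/\eta)^{n+1}$, and the back-transformed decay $\|e\|\lesssim \kappa^{n}\exp{-\alpha(n+1)\tau}=O(\exp{-\alpha\tau})$ are exactly the steps of Lemmas~\ref{Lemma:TimeScale} and~\ref{le:settling_time}. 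Your explicit treatment of $t\ge T_c$ via invariance of the origin under Levant's dynamics only makes explicit what the paper leaves implicit.
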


\begin{proof}
See Appendix~\ref{App:ProofMain}.
\end{proof}

Given that the redesigned differentiator is time-varying, we next provide necessary and sufficient conditions such that the resulting differentiator is uniformly Lyapunov stable with respect to time. 

\begin{proposition}
\label{Lemma:Uniform} Let $n \ge 1$. Under the construction in Theorem \ref{Th:MainResult}, the differentiation error in \eqref{Eq:DiffErr} is uniformly Lyapunov stable if and only if $\kappa(t)$ is uniformly bounded.
\end{proposition}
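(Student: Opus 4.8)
The plan is to reduce the statement to elementary properties of the change of variables that already underlies the proof of Theorem~\ref{Th:MainResult}. On $[0,T_c)$, introduce the strictly increasing reparametrisation $\tau(t):=\int_0^{t}\kappa(r)\,dr$, which by~\eqref{eq:kappa} satisfies $\exp{-\alpha\tau(t)}=(T_c-\eta t)/T_c$ and maps $[0,T_c)$ onto $[0,T_f)$, together with the invertible matrix $\Lambda(t):=\beta\,\kappa(t)^{\mathcal D}\mathcal Q$, where $\kappa(t)^{\mathcal D}:=\mathrm{diag}(\kappa(t)^{0},\dots,\kappa(t)^{n})$. Then $e(t)$ solves~\eqref{Eq:DiffErr} on $[0,T_c)$ if and only if $v(\tau):=\Lambda(t)^{-1}e(t)|_{t=t(\tau)}$ solves, in the time $\tau$, the error dynamics of the base differentiator~\eqref{Eq:BaseDiff} for a signal of class $\mathcal Y^{(n+1)}_{\mathcal L}$ with $\mathcal L(\tau)=L\exp{-\alpha(n+1)\tau}$; by~(A1) the origin of the $v$-dynamics is globally asymptotically stable, and this is uniform in the initial time because $\mathcal L$ is non-increasing, so re-initialising later only shrinks the admissible perturbation. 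On $[T_c,\infty)$ one has $\kappa\equiv1$ and the correction functions reduce to Levant's \emph{autonomous} exact differentiator $g_i(\cdot;L)$, whose origin is (hence uniformly) Lyapunov stable, and, by Theorem~\ref{Th:MainResult}, $e(T_c)=0$. Since $\kappa$ is increasing on $[0,T_c)$ and equals $1$ afterwards, $\kappa(t)\ge\underline\kappa:=\min\{1,\eta/(\alpha T_c)\}>0$ for all $t$, so $\|\Lambda(t)^{-1}\|\le\beta^{-1}\|\mathcal Q^{-1}\|\max\{1,\underline\kappa^{-n}\}$ is bounded uniformly in $t$, whereas $\|\Lambda(t)\|\le\beta\|\mathcal Q\|\max\{1,\kappa(t)^{n}\}$ is uniformly bounded \emph{exactly} when $n=0$ (then $\mathcal D=0$ and $\Lambda\equiv\beta\mathcal Q$) or $\kappa$ is uniformly bounded (equivalently $T_f<\infty$, since $\sup_{[0,T_c)}\kappa=\eta/(\alpha T_c(1-\eta))$); and if $n\ge1$ with $\kappa$ unbounded then $\kappa(t)\to\infty$ as $t\to T_c^-$, the blow-up of $\Lambda$ being carried by the components scaled by $\kappa(t)^{j}$, $j\ge1$.

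For the sufficiency direction ($n=0$ or $\kappa$ bounded), let $\bar\Lambda:=\sup_{t\ge0}\max\{\|\Lambda(t)\|,\|\Lambda(t)^{-1}\|\}<\infty$. Given $\epsilon>0$, uniform-in-time Lyapunov stability of the $v$-origin provides $\delta'>0$ with $\|v(\tau_0)\|\le\delta'\Rightarrow\|v(\tau)\|\le\epsilon/\bar\Lambda$ for all $\tau\ge\tau_0$; set $\delta:=\delta'/\bar\Lambda$. If $s<T_c$ and $\|e(s)\|\le\delta$, then $\|v(\tau(s))\|\le\bar\Lambda\delta=\delta'$, hence $\|v(\tau)\|\le\epsilon/\bar\Lambda$ for $\tau\ge\tau(s)$, hence $\|e(t)\|=\|\Lambda(t)v(\tau(t))\|\le\epsilon$ on $[s,T_c)$, and $e\equiv0$ on $[T_c,\infty)$; if $s\ge T_c$ the bound follows from Lyapunov stability of the autonomous $g_i$-dynamics. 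Thus the error dynamics is uniformly Lyapunov stable, the disturbance $d$ being absorbed into the class $\mathcal Y^{(n+1)}_{\mathcal L}$ used in~(A1).

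For the necessity direction I would argue by contradiction with $\epsilon_0=1$: assume $n\ge1$ and $\kappa$ unbounded, so $\eta=1$, $T_f=\infty$ and $\kappa(s)=1/(\alpha(T_c-s))\to\infty$ as $s\to T_c^-$; take a signal with $y^{(n+1)}\equiv0$ so that the $v$-dynamics is autonomous. Given $\delta>0$, take $s<T_c$ and the initial condition $e(s)=\delta B_1$; then, since $\kappa(s)^{-\mathcal D}B_1=B_1$, one has $v(\tau(s))=\tfrac{\delta}{\beta}\mathcal Q^{-1}B_1=:\delta\bar v$, a small fixed state. The line $\mathbb R\bar v=\mathbb R\,\mathcal Q^{-1}B_1$ is not invariant under the base-differentiator dynamics — a direct check using $\max_i\real{\lambda_i}=-(n+1)$ and $\alpha\in\mathcal I_\phi$ for the linear base differentiator of Table~\ref{Tab:phi}, with no nontrivial invariant line for the nonlinear ones (and when $L>0$ the disturbance term $B_{n+1}\delta(\tau)$ already suffices to leave it) — so the trajectory launched from $\delta\bar v$ acquires, for some $\tau^*\in(\tau(s),\tau(s)+\sigma(\delta))$ and some $j\in\{1,\dots,n\}$, a component with $|(\mathcal Q v(\tau^*))_j|\ge c\,\delta^{p}$, where $c,p>0$ are independent of $s,\delta$ and are read off from the early transient of $v$ ($p=1$ for the linear base differentiator; a dilation exponent otherwise, in which case $\sigma(\delta)\to0$ as $\delta\to0$, but then so does $\tau^*-\tau(s)$). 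Choosing $s$ close enough to $T_c$ that $\kappa(s)\ge1$, the instant $t^*=t(\tau^*)\in(s,T_c)$ satisfies $\kappa(t^*)^{j}\ge\kappa(t^*)\ge\kappa(s)$, whence
\[
\|e(t^*)\|\ \ge\ |e_j(t^*)|\ =\ \kappa(t^*)^{j}\,\beta\,\bigl|(\mathcal Q v(\tau^*))_j\bigr|\ \ge\ \kappa(s)\,\beta\,c\,\delta^{p};
\]
taking $s$ even closer to $T_c$ so that $\kappa(s)\ge1/(\beta c\,\delta^{p})$ yields $\|e(t^*)\|\ge1$ while $\|e(s)\|=\delta$. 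As $\delta>0$ was arbitrary, the error dynamics is not uniformly Lyapunov stable.

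I expect the main obstacle to be this peaking estimate in the necessity direction: the amplification factor $\kappa(t)^{j}$ grows like $\exp{\alpha j(\tau-\tau(s))}$ while $\|v\|$ decays like $\exp{-\alpha(n+1)(\tau-\tau(s))}$, so the two nearly cancel and the excitation of an amplified component has to be lower-bounded from the \emph{short-time} behaviour of $v$, before the decay prevails; this, together with the non-degeneracy that the base trajectory leaves the line $\mathbb R\,\mathcal Q^{-1}B_1$ (and the bookkeeping separating the linear from the homogeneous base differentiators), is the delicate part. The sufficiency direction is, by contrast, a routine transfer of stability through the uniformly bounded transformation $\Lambda^{\pm1}$.
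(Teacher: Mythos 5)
Your overall architecture coincides with the paper's: for sufficiency, transfer uniform Lyapunov stability of the auxiliary ($\chi$-) dynamics through the coordinate change $e=\beta\Lambda\mathcal{Q}\chi$, which is a bounded transformation with bounded inverse precisely when $n=0$ or $\kappa$ is bounded; for necessity, start from $e(s)=\beta w B_1$ (not shrunk by $\Lambda(s)^{-1}$), let the auxiliary dynamics excite a component $(\mathcal{Q}\chi)_j$ with $j\ge 1$ that is then amplified by $\kappa(t)^{j}$, and push $s\to T_c^-$. The sufficiency half and the $n=0$ case are essentially the paper's proof and are fine.

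The necessity half, however, has a genuine gap at exactly the step you yourself flag as delicate: you assert, without proof, that the trajectory launched from $w\mathcal{Q}^{-1}B_1$ acquires a transverse component --- via ``a direct check'' for the linear base differentiator and ``no nontrivial invariant line for the nonlinear ones.'' This cannot be left as a case-by-case remark: the proposition concerns \emph{arbitrary} admissible correction functions, not only those of Table~\ref{Tab:phi}, and for a general $\Phi$ nothing a priori prevents the velocity $-\Phi(w)+w\,\mathcal{U}\mathcal{Q}^{-1}B_1$ from being parallel to $\mathcal{Q}^{-1}B_1$ for every small $w$. The paper closes this with a dedicated lemma (Lemma~\ref{lem:deltas}): if that degeneracy held for all $w\in[-\delta_0,\delta_0]$, then \emph{every} trajectory of the auxiliary system in the strip $|\chi_0|\le\delta_0$ would satisfy $\tfrac{d}{d\tau}B_{n+1}^T\mathcal{Q}\chi=-\alpha n\,B_{n+1}^T\mathcal{Q}\chi$ (a computation using $\mathcal{Q}\mathcal{U}=(\mathcal{U}-\alpha\mathcal{D})\mathcal{Q}+\mathcal{Q}\mathcal{A}$ and $B_{n+1}^T\mathcal{U}=0$), so the last component of $\mathcal{Q}\chi$ would decay at the exact rate $\alpha n$, contradicting the rate $\alpha(n+1)$ required by (A1). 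This is where the exponent $n+1$ in the admissibility condition does real work; your appeal to $\real{\lambda_i}=-(n+1)$ is a special case of this mechanism, but the nonlinear cases need the same rate argument, not an invariant-line inspection. A secondary point: your insistence on a lower bound $c\,\delta^{p}$ with $c,p$ independent of $\delta$ is unnecessary and, for general $\Phi$, not obviously available. Since in the negation of uniform Lyapunov stability $\delta$ is quantified before $s$, a displacement $\tilde{\epsilon}(\delta)>0$ reached after a time $\tilde{\tau}(\delta)$ --- obtained merely from continuity of $\Phi$ at the chosen $w$ and time-invariance of the auxiliary system --- suffices; only independence of $s$ matters, and that is automatic. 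With the lemma supplied and the quantifiers arranged this way, your peaking estimate $\|e(t^*)\|\ge\beta\,\kappa(s)\,\tilde{\epsilon}$ goes through as in the paper.
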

\begin{proof}
See Appendix~\ref{appendix_propositions}.
\end{proof}

\begin{remark}
Note that for $n = 0$, uniform Lyapuonv stability is obtained also for unbounded $\kappa(t)$, because the error dynamics then are a scalar system.
In the context of differentiation, this special case is less important, however, because no derivatives are computed in this case.
\end{remark}
\begin{remark}
Absence of uniformity (with respect to time) is a significant problem in practice. Suppose that briefly before $T_c$, the differentiator error is accidentally perturbed (by measurement noise, round-off errors, etc.). Proposition~\ref{Lemma:Uniform} and its proof show that, depending on how close to $T_c$ this happens, an arbitrarily large peak on the differentiator error may occur.
Below, we show how to achieve the uniformly bounded gain required for uniform Lyapunov stability for $n \ge 1$.
\end{remark}

Compared to the base differentiator of Eq. \eqref{Eq:BaseDiff}, the redesigned differentiator of Eq. \eqref{Eq:Diff}  contains two additional parameters $
\alpha,\beta$ to be tuned. These parameters allow to tune the transient response, but for any admissible value of these parameters, the predefined-time convergence is maintained. The parameter selection of Eq. \eqref{Eq:Diff} can be mostly performed using existing criteria for the base differentiator (see Table \ref{Tab:phi} for examples). When our approach is applied to the case where $T_f<\infty$, our redesign methodology subsumes the one recently suggested in~\cite{Moreno2021arbitrary}, and it provides an additional degree of freedom $\alpha$ to tune the transient behavior. Moreover, as we show below, this parameter can be used to make the differentiator converge arbitrarily close to the desired \textit{UBST} $T_c$, as shown in the following result:

\begin{proposition}
\label{Prop:TightBound}
Assume that the base differentiator is such that $T_f<\infty$ and let $T^*_{\alpha}$ denote the least upper bound for the settling time of Eq. \eqref{Eq:Diff} with the parameter $\alpha\in\mathcal{I}_\phi$ with unbounded $\mathcal{I}_\phi$. Then, for any  $\varepsilon >0$, there exists $\alpha\in\mathcal{R}_+$ such that $T_c - T_{\alpha}^* \leq \varepsilon$.
\end{proposition}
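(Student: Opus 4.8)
The plan is to track how the least upper bound $T_\alpha^*$ depends on the parameter $\alpha$ and show it approaches $T_c$ as $\alpha\to\infty$. The key structural fact, established in the proof of Theorem~\ref{Th:MainResult}, is that the redesigned differentiator on $[0,T_c)$ is obtained from the base differentiator by a time-scaling induced by $\kappa(t)$, together with the coordinate change encoded in $\mathcal{Q}$ and the scaling $\beta$. Concretely, under the change of time variable $\tau = \int_0^t \kappa(s)\,ds$, the redesigned error dynamics on $[0,T_c)$ become (a scaled copy of) the base dynamics in the $\tau$-variable, and the map $t \mapsto \tau$ sends $[0,T_c)$ onto $[0,\infty)$ because $\int_0^{T_c}\kappa(s)\,ds = \infty$ (the gain $\kappa$ is not integrable near $T_c$). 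Since the base differentiator converges within time $T_f$ in the $\tau$-scale for the class $\mathcal{Y}^{(n+1)}_{\mathcal{L}(t)}$ with $\mathcal{L}(t)=L\exp{-\alpha(n+1)t}$, convergence in the original time occurs at the value $t^* < T_c$ solving $\tau(t^*) = T_f$, i.e. $\int_0^{t^*}\kappa(s)\,ds = T_f$.

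First I would compute $\int_0^{t}\kappa(s)\,ds$ explicitly from Eq.~\eqref{eq:kappa}: with $\kappa(s)=\eta/(\alpha(T_c-\eta s))$ one gets $\int_0^t \kappa(s)\,ds = -\tfrac1\alpha\ln\!\big(1-\tfrac{\eta t}{T_c}\big)$. Setting this equal to $T_f$ and solving for $t$ gives the worst-case convergence time in original time, $t^* = \tfrac{T_c}{\eta}\big(1-\exp{-\alpha T_f}\big) = \tfrac{T_c}{\eta}\cdot\eta = T_c$ — wait, that is exactly $T_c$ by the definition $\eta = 1-\exp{-\alpha T_f}$, which only confirms the UBST. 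The point is subtler: the base differentiator's actual (least) settling time in the $\tau$-scale, call it $T_f^*(\alpha) \le T_f$, may be strictly smaller, and moreover depends on $\alpha$ only through the perturbation bound $\mathcal{L}(t)=L\exp{-\alpha(n+1)t}$ and through the $\beta$-scaling of initial conditions. So the plan is: (i) express $T_\alpha^* = \tfrac{T_c}{\eta}\big(1-\exp{-\alpha T_f^*(\alpha)}\big)$ where $T_f^*(\alpha)=\sup$ over initial conditions and admissible signals of the base differentiator's $\tau$-settling time; (ii) show $T_c - T_\alpha^* = \tfrac{T_c}{\eta}\big(\exp{-\alpha T_f^*(\alpha)} - \exp{-\alpha T_f}\big) \le \tfrac{T_c}{\eta}\exp{-\alpha T_f^*(\alpha)}$; (iii) bound $T_f^*(\alpha)$ below by a positive constant uniform in $\alpha$ (or at least growing slowly enough), using that the base differentiator needs a strictly positive time to converge even from a single nonzero initial condition and that $\eta\to 1$ as $\alpha\to\infty$ is bounded away from $0$; (iv) conclude $\exp{-\alpha T_f^*(\alpha)}\to 0$ as $\alpha\to\infty$, so choosing $\alpha$ large enough makes $T_c - T_\alpha^* \le \varepsilon$.

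The main obstacle I anticipate is step (iii): controlling $T_f^*(\alpha)$ from below uniformly in $\alpha$. Increasing $\alpha$ simultaneously helps (it shrinks the perturbation envelope $\mathcal{L}(t)$, so the base differentiator's settling time cannot grow) and could in principle hurt if the $\beta$-scaling, which satisfies $\beta \ge (\alpha T_c/\eta)^{n+1}$ and hence grows with $\alpha$, inflated the effective initial condition; however, since $\beta$ rescales $e_0$ as $\beta^{-1}e_0$ inside $\Phi$ in the definition of $f$, larger $\beta$ actually corresponds to a milder effective initial error, which again can only shorten the base settling time. One still must argue $T_f^*(\alpha)$ does not collapse to $0$ — but that is immediate since the supremum over all initial conditions of the base differentiator's settling time is positive (indeed the base differentiator, being merely asymptotic with $T_f$ possibly equal to that supremum, has $T_f^*(\alpha)>0$), and more carefully one notes $\exp{-\alpha T_f^*(\alpha)} \le \exp{-\alpha\, \underline{T}}$ fails if $T_f^*(\alpha)$ could shrink like $O(1/\alpha)$; to rule this out I would use homogeneity/scaling of the base differentiator to show $T_f^*(\alpha)$ is in fact non-increasing in $\alpha$ and bounded below by its value at any fixed reference $\alpha_0\in\mathcal{I}_\phi$. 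With that monotonicity, $\exp{-\alpha T_f^*(\alpha)} \le \exp{-\alpha T_f^*(\alpha_0)} \to 0$, and since $T_c/\eta \le T_c/\eta(\alpha_0)$ is bounded, step (ii) yields $T_c - T_\alpha^* \to 0$, completing the argument. I would close by remarking that unboundedness of $\mathcal{I}_\phi$ is exactly what lets $\alpha$ be taken arbitrarily large, which is why it is hypothesized.
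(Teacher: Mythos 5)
Your proposal follows essentially the same route as the paper's proof: via the time-scale transformation of Lemma~\ref{Lemma:ParTrans} and the settling-time formula of Lemma~\ref{le:settling_time}, you express $T_\alpha^* = \eta^{-1}T_c(1-\exp{-\alpha \mathcal{T}^*})$ with $\mathcal{T}^*$ the least settling time of the auxiliary system, compute the slack $T_c - T_\alpha^* = \eta^{-1}T_c\left(\exp{-\alpha \mathcal{T}^*}-\exp{-\alpha T_f}\right)$, and send $\alpha\to\infty$; in fact you are more careful than the paper, which simply asserts $\lim_{\alpha\to\infty}\sigma(\alpha)=0$ without addressing the possible $\alpha$-dependence of $\mathcal{T}^*$ that your step (iii) worries about. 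The one slip is the direction of your monotonicity argument: since enlarging $\alpha$ shrinks the perturbation class $\mathcal{Y}^{(n+1)}_{\mathcal{L}(t)}$, the quantity $T_f^*(\alpha)$ is non-increasing, so for $\alpha\ge\alpha_0$ you get $T_f^*(\alpha)\le T_f^*(\alpha_0)$ and the inequality $\exp{-\alpha T_f^*(\alpha)}\le\exp{-\alpha T_f^*(\alpha_0)}$ you invoke points the wrong way. What you actually need, and what holds, is a uniform positive lower bound $T_f^*(\alpha)\ge\underline{T}>0$: the zero signal belongs to every class $\mathcal{Y}^{(n+1)}_{\mathcal{L}(t)}$, so $T_f^*(\alpha)$ is bounded below by the (positive, $\alpha$-independent) settling time of the unperturbed base differentiator from any fixed nonzero initial condition. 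This yields $\exp{-\alpha T_f^*(\alpha)}\le\exp{-\alpha\underline{T}}\to 0$ and, combined with $\eta\to1$, completes your step (iv); with this repair the argument is correct.
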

\begin{proof}
See Appendix~\ref{appendix_propositions}.
\end{proof}
In the simulation results of the following section, we show that $\alpha,\beta$  can also be used to improve the transient performance and reduce the over-engineering of the differentiator (i.e., an overly conservative bound $T_c$). 

Note that the prescribed-time observer proposed by Holloway et al. \cite{Holloway2019}, which can be seen as an exact predefined-time differentiator for signals of class $\mathcal{Y}_{0}^{(n+1)}$, has the interesting property that the settling time function~\eqref{eq:settling_time} is such that  $T(\mathbf{e}(0))=T_c$ for every nonzero initial condition. However, the time-varying gain will be such that $\lim_{t\to T_c}\kappa(t)=\infty$. These features are also present in our approach when the base differentiator is not an exact differentiator (e.g. if the base differentiator is linear as in Table~\ref{Tab:phi}-i)). %
To maintain a bounded gain, Holloway et al. suggest to ``turn off" the error correction terms at some time $t_{stop}<T_c$. However, turning-off the error correction functions in such a way will destroy the exactness of the differentiator in~\cite{Holloway2019} (the differentiation error will no longer be zero at time $T_c$). In fact, it follows from~\cite[Eq.~(27)]{Holloway2019}, that there exist constants $M,\delta>0$ and an integer $m\in\mathbb{N}$ such that
$
\|\mathbf{e}(t_{stop})\|\leq M \left(\frac{T_c-t_{stop}}{T_c}\right)^{m+1}\exp{-\delta t_{stop}}\|\mathbf{e}(0)\|
$.
Thus, the bound for $\|\mathbf{e}(t_{stop})\|$ is an unbounded function of the initial condition. 
Our methodology circumvents this limitation in two ways. First, we provide sufficient conditions such that the gain is finite at the convergence time. Nevertheless, the \textit{TVG} may grow as the magnitude of the initial error grows. Second, we provide sufficient conditions to guarantee that the gain remains bounded regardless of the initial condition. The following proposition formalizes these two points:

\begin{proposition}
\label{Prop:Bound}
Consider the base differentiator \eqref{Eq:BaseDiff} and the redesigned differentiator as in Theorem~\ref{Th:MainResult}.
\begin{itemize}
    \item[(i)] If the origin of the error dynamics of the base differentiator of Eq.~\eqref{Eq:BaseDiff} is globally finite-time stable, then in the redesigned differentiator $\kappa(T(\mathbf{e}(0)))<\infty$ for all $\mathbf{e}(0)\in\mathbb{R}^{n+1}$.
    \item[(ii)] If the origin of the error dynamics of the base differentiator of Eq.~\eqref{Eq:BaseDiff} is globally fixed-time stable, then, there exists $\kappa_{\max} <\infty$ such that in the redesigned differentiator $\kappa(T(\mathbf{e}(0))) \leq \kappa_{\max}$ for all $\mathbf{e}(0)\in\mathbb{R}^{n+1}$. In particular, if $T_f < \infty$ is a known \textit{UBST} of the base differentiator~\eqref{Eq:BaseDiff}, then 
\begin{equation}
\label{eq:gain_bound}
\kappa(t)\leq 
\kappa_{\max} :=  \frac{\exp{\alpha T_f}-1}{\alpha T_c} \ \ \text{ for all } t\geq 0.
\end{equation}

\end{itemize}
\end{proposition}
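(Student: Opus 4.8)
The plan is to reduce the statement to the time reparametrization underlying Theorem~\ref{Th:MainResult}, which I would first make explicit. Put $\varphi(t):=\int_0^t\kappa(s)\,ds=\tfrac1\alpha\ln\!\big(\tfrac{T_c}{T_c-\eta t}\big)$ for $t\in[0,T_c)$, with the convention $\eta=1$, $T_f=\infty$ when no settling-time bound of the base differentiator is available. Then $\dot\varphi=\kappa$, $\varphi(0)=0$, $\varphi$ is a strictly increasing bijection of $[0,T_c)$ onto $[0,T_f)$, and a direct computation gives $\kappa(t)=\tfrac{\eta}{\alpha T_c}\,e^{\alpha\varphi(t)}$. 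The proof of Theorem~\ref{Th:MainResult} provides the key input: along solutions, the redesigned error $e(t)$ on $[0,T_c)$ is obtained from the error $\tilde e(\tau)$ of the base differentiator~\eqref{Eq:BaseDiff}, driven by a signal of class $\mathcal Y_{\mathcal L(t)}^{(n+1)}$ with $\mathcal{L}(t)=Le^{-\alpha(n+1)t}$, through an invertible state rescaling composed with the time change $\tau=\varphi(t)$; in particular $e(t)=0$ iff $\tilde e(\varphi(t))=0$. Writing $\tilde e(0)$ for the (bijective, linear) image of $e(0)$ under the state rescaling at $t=0$, the settling-time functions are thus linked by $T(e(0))=\varphi^{-1}\!\big(T_{\mathrm{base}}(\tilde e(0))\big)$ whenever $T_{\mathrm{base}}(\tilde e(0))<T_f$, whence
\begin{equation}
\kappa\big(T(e(0))\big)=\tfrac{\eta}{\alpha T_c}\,e^{\,\alpha\, T_{\mathrm{base}}(\tilde e(0))}.
\end{equation}

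For item~(i): global finite-time stability of the base differentiator gives $T_{\mathrm{base}}(\tilde e(0))<\infty$ for every $e(0)$ (and $T_{\mathrm{base}}(\tilde e(0))\le T_f$ if a finite $T_f$ is used in the construction, else $T_{\mathrm{base}}(\tilde e(0))<\infty=T_f$ strictly), so the displayed identity yields $\kappa(T(e(0)))<\infty$. The borderline subcase $T_{\mathrm{base}}(\tilde e(0))=T_f$ gives $T(e(0))=T_c$, where the switched value $\kappa(T_c)=1$ applies and the conclusion is trivial.

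For item~(ii): global fixed-time stability of the base differentiator furnishes a finite least UBST $T_f^*$ with $T_{\mathrm{base}}(\tilde e(0))\le T_f^*$ for all $e(0)$, hence $\kappa(T(e(0)))\le\tfrac{1}{\alpha T_c}e^{\alpha T_f^*}=:\kappa_{\max}<\infty$ uniformly in $e(0)$. If moreover a finite UBST $T_f$ of the base differentiator is the one used in the construction, so $\eta=1-e^{-\alpha T_f}$, then $T_{\mathrm{base}}(\tilde e(0))\le T_f$ already gives $\kappa(T(e(0)))\le\tfrac{\eta}{\alpha T_c}e^{\alpha T_f}=\tfrac{e^{\alpha T_f}-1}{\alpha T_c}$; and the stronger bound $\kappa(t)\le\kappa_{\max}$ for \emph{all} $t\ge0$ in~\eqref{eq:gain_bound} follows straight from~\eqref{eq:kappa}: on $[0,T_c)$ the denominator $T_c-\eta t$ is positive (as $\eta t<\eta T_c\le T_c$) and strictly decreasing, so $\kappa$ is increasing with $\kappa(t)<\lim_{s\to T_c^-}\kappa(s)=\tfrac{\eta}{\alpha T_c(1-\eta)}=\tfrac{e^{\alpha T_f}-1}{\alpha T_c}=\kappa_{\max}$, while for $t\ge T_c$, $\kappa(t)=1\le\kappa_{\max}$ (e.g.\ whenever $T_c\le T_f$, since then $\alpha T_c\le\alpha T_f\le e^{\alpha T_f}-1$).

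The main obstacle I anticipate is the first paragraph: extracting from the proof of Theorem~\ref{Th:MainResult} the precise state/time change of variables relating the redesigned and base error dynamics, checking it is invertible at each fixed time so that ``$e(t)=0$ iff $\tilde e(\varphi(t))=0$'' and the settling-time identity are legitimate, and handling the borderline case $T_{\mathrm{base}}(\tilde e(0))=T_f$. Granted that correspondence, items~(i) and~(ii) are immediate from the explicit form of $\kappa$.
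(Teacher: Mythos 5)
Your proof is correct and follows essentially the same route as the paper's: the paper invokes Lemma~\ref{le:settling_time}, i.e.\ the settling-time relation $T(e(0))=\varphi^{-1}(\mathcal{T}(\chi(0)))$ coming from the coordinate/time-scale change of Lemma~\ref{Lemma:TimeScale}, and then reads the bounds off the explicit form of $\kappa$, exactly as you do via the identity $\kappa(T(e(0)))=\tfrac{\eta}{\alpha T_c}\,e^{\alpha \mathcal{T}(\chi(0))}$. Your remark that the bound \eqref{eq:gain_bound} for $t\ge T_c$ additionally requires $\kappa_{\max}\ge 1$ (guaranteed e.g.\ when $T_c\le T_f$) is a legitimate point the paper glosses over, since its proof only establishes the inequality on $[0,T_c)$.
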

\begin{proof}
See Appendix~\ref{appendix_propositions}.
\end{proof} 

We also note that our approach yields a tradeoff for $\kappa(t)$: 
if one tries to make the convergence ``tighter'' by adjusting $\alpha$, then this necessarily yields a bigger bound $\kappa_{\max}$. Conversely,  a smaller $\kappa_{\max}$ will result in a larger ``slack'' between $T_c$ and the least \textit{UBST} of~\eqref{Eq:DiffErr}. 

\begin{remark}
Note that even if there is a bound $\kappa_{\max}$ for $\kappa(t)$, in practice, to avoid large values of $\kappa(t)$, one may wish to detect the convergence of the differentiator by monitoring $|e_0(t)|$ as in~\cite{Angulo2009OnHOSMs} and make the switching in the $\{h_i\}_{i=0}^n$ functions when the convergence is detected.
\end{remark}

\begin{remark} 
\label{Rem:Filter}
It is straightforward to extend our methodology to filtering differentiators \cite{Levant2019,Carvajal-Rubio2020}. Specifically, let $\{h_i\}_{i=0}^n$ selected as in Theorem~\ref{Th:MainResult} for signals in $\mathcal{Y}^{(n+1)}_L$. Then, the algorithm
\begin{align}
\dot{w}_i=&-h_{i-1}(w_1,t;T_c)+w_{i+1}, \intertext{ for  $i= 1,\ldots,n_f-1$,}
\dot{w}_i=&-h_{i-1}(w_1,t;T_c)+(z_0-y), \intertext{ for $i=n_f$,} \label{Eq:FilDiff} 
\dot{z}_{i-n_f-1}=&-h_{i-1}(w_1,t;T_c)+z_{i-n_f}, \intertext{ for  $i=n_f+1,\ldots,n_d+n_f$,} 
\dot{z}_{n_d}=&-h_n(w_1,t;T_c),  
\end{align}
with $n=n_d+n_f$, is a predefined-time exact differentiator but now for signals in $\mathcal{Y}^{(n_d+1)}_L$. However, in this case, we obtain that, for all $t\geq T_c$ and every initial condition $w_i(0)$, $i=1,\ldots,n_f$, $z_i(0)$, $i=0,\ldots,n_d$: $w_i(t)=0$, $i=1,\ldots,n_f$, and  $z_i(t)=y^{(i)}(t)$ for all $i=0,\ldots,n_d$. For $n_f=0$, $w_1(t)$ is defined as $w_1(t)=z_0(t)-y(t)$. This observation follows by noticing that by setting the differentiation errors $e_i(t)=w_{i+1}(t)$, for $i=0,\ldots,n_f-1$ and $e_i(t)=z_{i-n_f}(t)-y^{\left(i-n_f\right)}(t)$ for $i=n_f,\ldots,n_d+n_f$ we obtain the error dynamics~\eqref{Eq:DiffErr} where $n=n_d+n_f$ and $d(t)=-y^{(n_d+1)}(t)$ with $|d(t)|\leq L$. The filtering properties of this algorithm can be very useful in the presence of noise~\cite{Levant2019}, as we numerically illustrate in the following section.
\end{remark}

\begin{remark}
The proof of Theorem~\ref{Th:MainResult} given in Appendix~\ref{App:ProofMain} is obtained by relating, by the coordinate change~\eqref{Eq:Transfom} and the time-scale transformation given in Lemma~\ref{Lemma:ParTrans}, the differentiator's error dynamics~\eqref{Eq:DiffErr} with an asymptotically stable ``auxiliary system"~\eqref{Eq:AuxSyst} that is built with the selection of correction functions $\{\phi_i\}_{i=0}^n$.  Following the same ideas, different redesigned correction functions can also be obtained. For instance, the proof may straightforwardly be applied to a differentiator with correction functions

\begin{equation}
\label{Eq:hFuncAlternative}
    h_i(e_0,t;T_c):=\left\lbrace
    \begin{array}{ll}
        \kappa(t)^{1+i-\rho}f_{\rho,i}(e_0,t) & \text{for } t\in[0,T_c), \\
        g_i(e_0; L) & \text{otherwise,}
    \end{array}
    \right.
\end{equation}
parametrized
\footnote{The correction functions in Theorem~\ref{Th:MainResult} correspond to the case $\rho = 0$. The first-order differentiator in~\cite{Orlov2022Prescribed-TimeGains} is obtained by redesigning Levant's super-twisting algorithm~\cite{Levant1998RobustTechnique} with $\rho=n$.}
using $\rho\in[0,n+1]$, where $f_{\rho,i}(e_0,t)$ is the $(i+1)-$th component of
$$
\mathbf{f}_{\rho}(e_0,t):=\beta\mathbf{Q}_{\rho}\Phi(\beta^{-1}\kappa(t)^\rho e_0)+\kappa(t)^{\rho}(\mathbf{U}-\alpha \mathbf{D}_{\rho})^{n+1} \mathbf{b}_{n}e_0.
$$ 
with $\mathbf{D}_{\rho}:=\text{diag}\{-\rho,1-\rho,\ldots,n-\rho\}$, $
\mathbf{Q}_{\rho}: = \begin{bmatrix}
(\mathbf{U}-\alpha \mathbf{D}_{\rho})^{n} \mathbf{b}_{n}; &
\cdots; &
(\mathbf{U}-\alpha \mathbf{D}_{\rho}) \mathbf{b}_{n}; &
\mathbf{b}_{n}
\end{bmatrix},$ and $\beta\geq(\alpha T_c/\eta)^{n+1-\rho}$. The corresponding error dynamics are still related with the ``auxiliary system"~\eqref{Eq:AuxSyst}, but with 
$\pi(\tau)=\beta^{-1}(\alpha T_c/\eta)^{n+1-\rho}\exp{-\alpha(n+1-\rho) \tau}d(\varphi^{-1}(\tau))$, where $\varphi^{-1}(\tau)=t=\eta^{-1}T_c(1-\exp{-\alpha\tau})$ is the parameter transformation from Lemma \ref{Lemma:ParTrans} in Appendix \ref{Appendix:PrelimTimScale}) by the coordinate transformation
$$
\mathbf{e}(t) = \beta \mathbf{K}_{\rho}(t) \mathbf{Q}_{\rho} \mathbf{\chi}(\varphi(t)),
$$
where $\mathbf{K}_{\rho}(t):=\textrm{diag}(\kappa(t)^{-\rho},\kappa(t)^{1-\rho},\ldots,\kappa(t)^{n-\rho})$,
and the time-scale transformation given in Lemma~\ref{Lemma:ParTrans}.
\end{remark}

\section{Numerical analysis and comparisons}
\label{Sec:Sim}

Here we present case studies to analyze the performance of the redesigned differentiators, showing in particular the slack of the \textit{UBST} given by $T_c$, boundedness of the \textit{TVG}, and sensitivity to noise.  The simulations below were created in OpenModelica using the Euler integration method with a step size of $1\times 10^{-7}$.

\begin{example}
\label{Ex:FOD}
Here, we design a first-order exact fixed-time differentiator whose time-varying gain remains bounded. We also systematically study its performance against noises of different magnitudes.
Consider the base differentiator with correction functions $\phi_i(w,\tau;)$ given in Table~\ref{Tab:phi}-\textit{iv)} with $T_f=T_{\max}^*=1$. We set $\alpha=3$ and $\beta=2(\alpha T_c/\eta)^{n+1}$, and for $g_i(e_0,t)$, we choose $l_0=1.5$ and $l_1=1.1$. Thus,  
$$
\mathbf{Q}=
\left[
\begin{array}{rr}
    1   &  0 \\
    -\alpha  &   1
\end{array}
\right].
$$

In Figure~\ref{fig:FOD}a-c, we show the simulation of algorithm~\eqref{Eq:Diff}, with a desired \textit{UBST} given by $T_c=1$, to differentiate signals with different bounds of the second derivative. That is, different values of $L$ and different noise magnitude are considered. In~Figure \ref{fig:FOD}d, we show the simulation for the algorithm as a filtering differentiator~\eqref{Eq:FilDiff} with $n_f=1$ and $n_d=0$. Recall that, in the absence of noise $w_1(t)=0$ and $z_0(t)=y(t)$ for all $t\geq T_c$. Figure \ref{fig:FOD}d shows the behavior of this algorithm under noise. In all examples, the initial conditions were chosen as $z_0(0)=z_1(0)=10$.  
\begin{figure*}
    \centering
\def\svgwidth{16.0cm}
\begingroup%
  \makeatletter%
  \providecommand\color[2][]{%
    \errmessage{(Inkscape) Color is used for the text in Inkscape, but the package 'color.sty' is not loaded}%
    \renewcommand\color[2][]{}%
  }%
  \providecommand\transparent[1]{%
    \errmessage{(Inkscape) Transparency is used (non-zero) for the text in Inkscape, but the package 'transparent.sty' is not loaded}%
    \renewcommand\transparent[1]{}%
  }%
  \providecommand\rotatebox[2]{#2}%
  \newcommand*\fsize{\dimexpr\f@size pt\relax}%
  \newcommand*\lineheight[1]{\fontsize{\fsize}{#1\fsize}\selectfont}%
  \ifx\svgwidth\undefined%
    \setlength{\unitlength}{706.58581543bp}%
    \ifx\svgscale\undefined%
      \relax%
    \else%
      \setlength{\unitlength}{\unitlength * \real{\svgscale}}%
    \fi%
  \else%
    \setlength{\unitlength}{\svgwidth}%
  \fi%
  \global\let\svgwidth\undefined%
  \global\let\svgscale\undefined%
  \makeatother%
  \begin{picture}(1,0.30163763)%
    \lineheight{1}%
    \setlength\tabcolsep{0pt}%
    \put(0,0){\includegraphics[width=\unitlength,page=1]{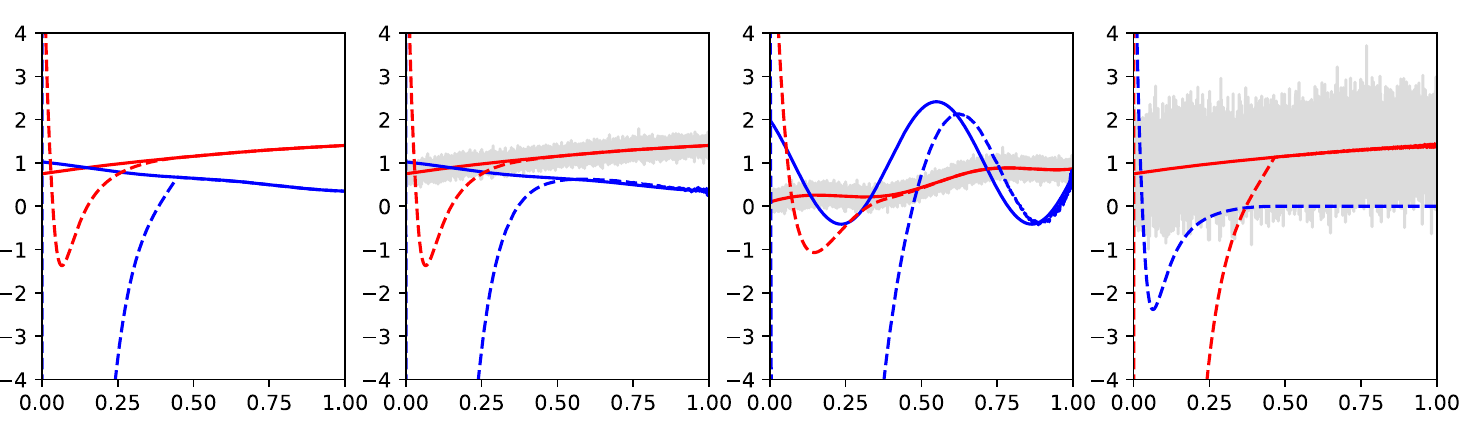}}%
    \put(0,0){\includegraphics[width=\unitlength,page=1]{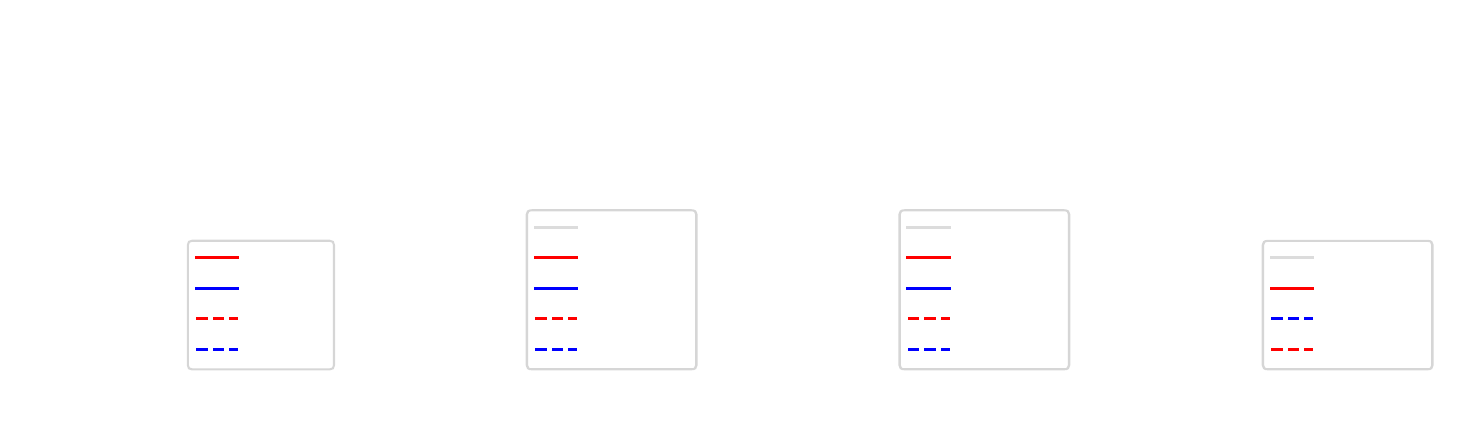}}%
    \tiny{
    \put(0.11647682,0.0002004){\color[rgb]{0,0,0}\makebox(0,0)[lt]{\lineheight{1.25}\smash{\begin{tabular}[t]{l}time\end{tabular}}}}%
    \put(0.36369134,0.0002004){\color[rgb]{0,0,0}\makebox(0,0)[lt]{\lineheight{1.25}\smash{\begin{tabular}[t]{l}time\end{tabular}}}}%
    \put(0.85812032,0.0002004){\color[rgb]{0,0,0}\makebox(0,0)[lt]{\lineheight{1.25}\smash{\begin{tabular}[t]{l}time\end{tabular}}}}%
    \put(0.61090587,0.0002004){\color[rgb]{0,0,0}\makebox(0,0)[lt]{\lineheight{1.25}\smash{\begin{tabular}[t]{l}time\end{tabular}}}}%
    \put(0.12536364,0.290885){\color[rgb]{0,0,0}\makebox(0,0)[lt]{\lineheight{1.25}\smash{\begin{tabular}[t]{l}a)\end{tabular}}}}%
    \put(0.37220499,0.290885){\color[rgb]{0,0,0}\makebox(0,0)[lt]{\lineheight{1.25}\smash{\begin{tabular}[t]{l}b)\end{tabular}}}}%
    \put(0.62027296,0.290885){\color[rgb]{0,0,0}\makebox(0,0)[lt]{\lineheight{1.25}\smash{\begin{tabular}[t]{l}c)\end{tabular}}}}%
    \put(0.8668862,0.290885){\color[rgb]{0,0,0}\makebox(0,0)[lt]{\lineheight{1.25}\smash{\begin{tabular}[t]{l}d)\end{tabular}}}}%
    \put(0.17039995,0.1219419){\color[rgb]{0,0,0}\makebox(0,0)[lt]{\lineheight{1.25}\smash{\begin{tabular}[t]{l}$y(t)$\end{tabular}}}}%
    \put(0.17039995,0.10283594){\color[rgb]{0,0,0}\makebox(0,0)[lt]{\lineheight{1.25}\smash{\begin{tabular}[t]{l}$\dot{y}(t)$\end{tabular}}}}%
    \put(0.17039995,0.08160709){\color[rgb]{0,0,0}\makebox(0,0)[lt]{\lineheight{1.25}\smash{\begin{tabular}[t]{l}$z_0(t)$\end{tabular}}}}%
    \put(0.17039995,0.06250114){\color[rgb]{0,0,0}\makebox(0,0)[lt]{\lineheight{1.25}\smash{\begin{tabular}[t]{l}$z_1(t)$\end{tabular}}}}%
    \put(0.39330284,0.1219419){\color[rgb]{0,0,0}\makebox(0,0)[lt]{\lineheight{1.25}\smash{\begin{tabular}[t]{l}$y(t)$\end{tabular}}}}%
    \put(0.39330284,0.10283594){\color[rgb]{0,0,0}\makebox(0,0)[lt]{\lineheight{1.25}\smash{\begin{tabular}[t]{l}$\dot{y}(t)$\end{tabular}}}}%
    \put(0.39330284,0.08160709){\color[rgb]{0,0,0}\makebox(0,0)[lt]{\lineheight{1.25}\smash{\begin{tabular}[t]{l}$z_0(t)$\end{tabular}}}}%
    \put(0.39330284,0.06250114){\color[rgb]{0,0,0}\makebox(0,0)[lt]{\lineheight{1.25}\smash{\begin{tabular}[t]{l}$z_1(t)$\end{tabular}}}}%
    \put(0.65017189,0.1219419){\color[rgb]{0,0,0}\makebox(0,0)[lt]{\lineheight{1.25}\smash{\begin{tabular}[t]{l}$y(t)$\end{tabular}}}}%
    \put(0.65017189,0.10283594){\color[rgb]{0,0,0}\makebox(0,0)[lt]{\lineheight{1.25}\smash{\begin{tabular}[t]{l}$\dot{y}(t)$\end{tabular}}}}%
    \put(0.65017189,0.08160709){\color[rgb]{0,0,0}\makebox(0,0)[lt]{\lineheight{1.25}\smash{\begin{tabular}[t]{l}$z_0(t)$\end{tabular}}}}%
    \put(0.65017189,0.06250114){\color[rgb]{0,0,0}\makebox(0,0)[lt]{\lineheight{1.25}\smash{\begin{tabular}[t]{l}$z_1(t)$\end{tabular}}}}%
    \put(0.8943036,0.1219419){\color[rgb]{0,0,0}\makebox(0,0)[lt]{\lineheight{1.25}\smash{\begin{tabular}[t]{l}$y(t)+\nu(t)$\end{tabular}}}}%
    \put(0.8943036,0.10283594){\color[rgb]{0,0,0}\makebox(0,0)[lt]{\lineheight{1.25}\smash{\begin{tabular}[t]{l}$y(t)$\end{tabular}}}}%
    \put(0.8943036,0.08160709){\color[rgb]{0,0,0}\makebox(0,0)[lt]{\lineheight{1.25}\smash{\begin{tabular}[t]{l}$w_1(t)$\end{tabular}}}}%
    \put(0.8943036,0.06250114){\color[rgb]{0,0,0}\makebox(0,0)[lt]{\lineheight{1.25}\smash{\begin{tabular}[t]{l}$z_0(t)$\end{tabular}}}}%
    \put(0.65017189,0.14317074){\color[rgb]{0,0,0}\makebox(0,0)[lt]{\lineheight{1.25}\smash{\begin{tabular}[t]{l}$y(t)+\nu(t)$\end{tabular}}}}%
    \put(0.39330288,0.14317074){\color[rgb]{0,0,0}\makebox(0,0)[lt]{\lineheight{1.25}\smash{\begin{tabular}[t]{l}$y(t)+\nu(t)$\end{tabular}}}}%
    }
  \end{picture}%
\endgroup%
    \caption{Simulation results for Example \ref{Ex:FOD}. For a) and b) we apply the algorithm~\eqref{Eq:Diff} with $n=1$ and $L=1$, to the signal $y(t)=0.75\cos(t) + 0.0025\sin(10t)+t$ for a) the noiseless case, b) with white noise $\nu(t)$ with standard deviation of $0.1$. For c) we apply the algorithm~\eqref{Eq:Diff} with $n=1$ and $L=10$, to the signal $y(t)=0.1\cos(10t) + 0.1\sin(10t)+t$ with white noise $\nu(t)$ with standard deviation of $0.1$ . For d) we apply the algorithm~\eqref{Eq:FilDiff} with  $n_f=1$, $n_d=0$, and $L=1$, to the signal $y(t)=0.75\cos(t) + 0.0025\sin(10t)+t$ with white noise $\nu(t)$ with standard deviation of $0.5$. For all simulations $\kappa(t)$ is bounded by $\kappa_{\max}=6.362$.}
    \label{fig:FOD}
\end{figure*}
\end{example}

Recall that in~\cite[Section~5.1]{Seeber2020ExactBound}, the convergence is obtained at 0.25 while the \textit{UBST} is 1. Example~\ref{ex:seeber_comparison} illustrates how our redesign significantly reduces the overestimation and allows to tune the transient behavior. It also illustrates the case where predefined-time convergence is obtained with bounded gains.

\begin{example}
\label{ex:seeber_comparison}
In this example, we compare the performance of our differentiator with the predefined-time differentiator introduced in~\cite[Section~5.1]{Seeber2020ExactBound}. To this end, consider the same situation as in Example~\ref{Ex:FOD} using the predefined-time differentiator~\eqref{Eq:Diff} with $n=1$ and differentiating the signal $y(t)=0.75\cos(t) + 0.025\sin(10t)+t$, and notice that the differentiator in Example~\ref{Ex:FOD} can be seen as a redesigned differentiator obtained using the algorithm in~\cite[Section~5.1]{Seeber2020ExactBound} as a base differentiator.
In Figure~\ref{fig:Seeber_comparison}, we compare the performance of both algorithms for different initial conditions.
\begin{figure*}
    \centering
\def\svgwidth{16.0cm}
\begingroup%
  \makeatletter%
  \providecommand\color[2][]{%
    \errmessage{(Inkscape) Color is used for the text in Inkscape, but the package 'color.sty' is not loaded}%
    \renewcommand\color[2][]{}%
  }%
  \providecommand\transparent[1]{%
    \errmessage{(Inkscape) Transparency is used (non-zero) for the text in Inkscape, but the package 'transparent.sty' is not loaded}%
    \renewcommand\transparent[1]{}%
  }%
  \providecommand\rotatebox[2]{#2}%
  \newcommand*\fsize{\dimexpr\f@size pt\relax}%
  \newcommand*\lineheight[1]{\fontsize{\fsize}{#1\fsize}\selectfont}%
  \ifx\svgwidth\undefined%
    \setlength{\unitlength}{986.11126709bp}%
    \ifx\svgscale\undefined%
      \relax%
    \else%
      \setlength{\unitlength}{\unitlength * \real{\svgscale}}%
    \fi%
  \else%
    \setlength{\unitlength}{\svgwidth}%
  \fi%
  \global\let\svgwidth\undefined%
  \global\let\svgscale\undefined%
  \makeatother%
  \begin{picture}(1,0.21229812)%
    \lineheight{1}%
    \setlength\tabcolsep{0pt}%
    \put(0,0){\includegraphics[width=\unitlength,page=1]{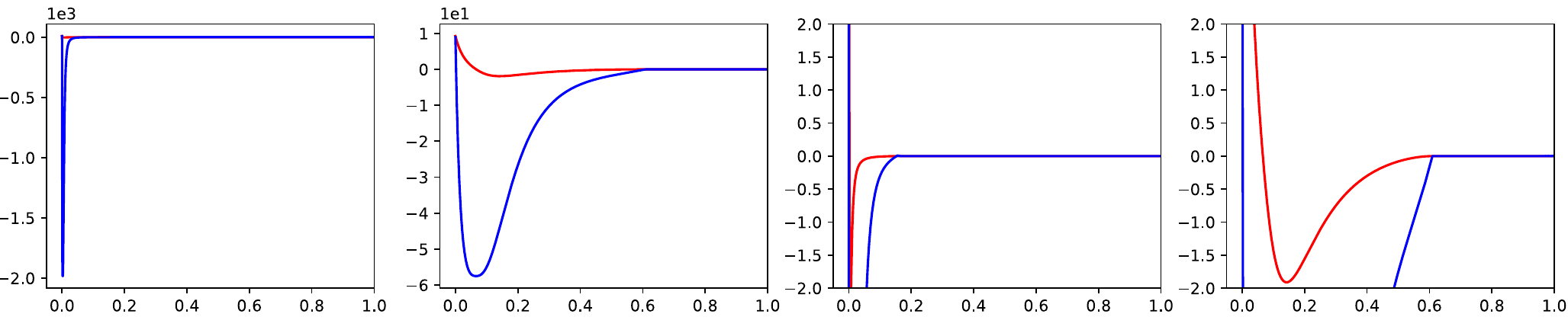}}%
    \put(0,0){\includegraphics[width=\unitlength,page=1]{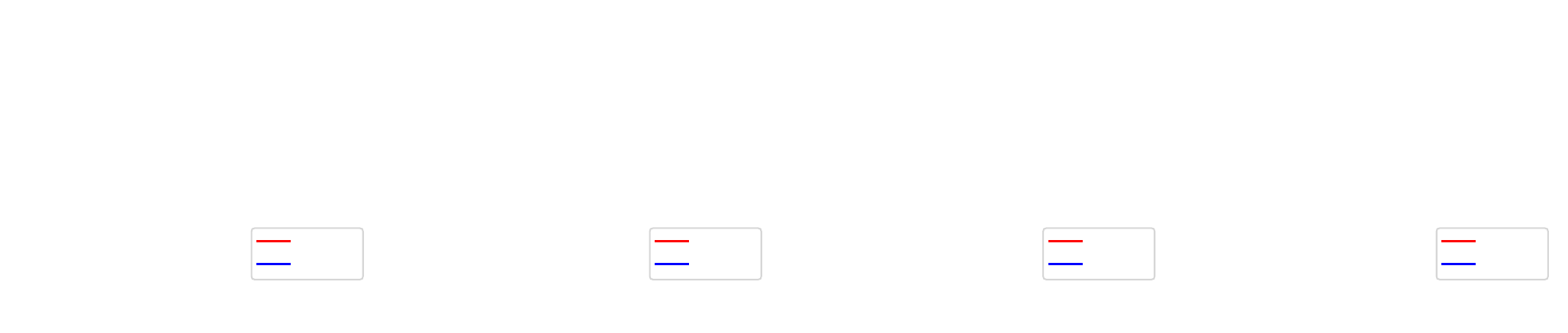}}%
    \tiny{
    \put(0.18656407,0.054109){\color[rgb]{0,0,0}\makebox(0,0)[lt]{\lineheight{1.25}\smash{\begin{tabular}[t]{l}$e_0(t)$\end{tabular}}}}%
    \put(0.18624204,0.0413909){\color[rgb]{0,0,0}\makebox(0,0)[lt]{\lineheight{1.25}\smash{\begin{tabular}[t]{l}$e_1(t)$\end{tabular}}}}%
    \put(0.44059221,0.054109){\color[rgb]{0,0,0}\makebox(0,0)[lt]{\lineheight{1.25}\smash{\begin{tabular}[t]{l}$e_0(t)$\end{tabular}}}}%
    \put(0.44027017,0.0413909){\color[rgb]{0,0,0}\makebox(0,0)[lt]{\lineheight{1.25}\smash{\begin{tabular}[t]{l}$e_1(t)$\end{tabular}}}}%
    \put(0.69157806,0.054109){\color[rgb]{0,0,0}\makebox(0,0)[lt]{\lineheight{1.25}\smash{\begin{tabular}[t]{l}$e_0(t)$\end{tabular}}}}%
    \put(0.69125602,0.0413909){\color[rgb]{0,0,0}\makebox(0,0)[lt]{\lineheight{1.25}\smash{\begin{tabular}[t]{l}$e_1(t)$\end{tabular}}}}%
    \put(0.94256394,0.054109){\color[rgb]{0,0,0}\makebox(0,0)[lt]{\lineheight{1.25}\smash{\begin{tabular}[t]{l}$e_0(t)$\end{tabular}}}}%
    \put(0.9422419,0.0413909){\color[rgb]{0,0,0}\makebox(0,0)[lt]{\lineheight{1.25}\smash{\begin{tabular}[t]{l}$e_1(t)$\end{tabular}}}}%
    \put(0.10253701,0.20459345){\color[rgb]{0,0,0}\makebox(0,0)[lt]{\lineheight{1.25}\smash{\begin{tabular}[t]{l}Seeber et al.~\cite{Seeber2020ExactBound} \end{tabular}}}}%
    \put(0.37418825,0.20370839){\color[rgb]{0,0,0}\makebox(0,0)[lt]{\lineheight{1.25}\smash{\begin{tabular}[t]{l}Ours\end{tabular}}}}%
    \put(0.60442938,0.20459345){\color[rgb]{0,0,0}\makebox(0,0)[lt]{\lineheight{1.25}\smash{\begin{tabular}[t]{l}Seeber et al.~\cite{Seeber2020ExactBound} \end{tabular}}}}%
    \put(0.87608063,0.20370839){\color[rgb]{0,0,0}\makebox(0,0)[lt]{\lineheight{1.25}\smash{\begin{tabular}[t]{l}Ours\end{tabular}}}}%
    \put(0.12388824,0.00014359){\color[rgb]{0,0,0}\makebox(0,0)[lt]{\lineheight{1.25}\smash{\begin{tabular}[t]{l}time\end{tabular}}}}%
    \put(0.37483443,0.00014359){\color[rgb]{0,0,0}\makebox(0,0)[lt]{\lineheight{1.25}\smash{\begin{tabular}[t]{l}time\end{tabular}}}}%
    \put(0.62578061,0.00014359){\color[rgb]{0,0,0}\makebox(0,0)[lt]{\lineheight{1.25}\smash{\begin{tabular}[t]{l}time\end{tabular}}}}%
    \put(0.87672681,0.00014359){\color[rgb]{0,0,0}\makebox(0,0)[lt]{\lineheight{1.25}\smash{\begin{tabular}[t]{l}time\end{tabular}}}}%
    }
  \end{picture}%
\endgroup%
    \caption{Simulation results for Example~\ref{ex:seeber_comparison}. Comparing our approach with $\alpha=5$ and $\beta=1.5(\alpha T_c/\eta)^{n+1}$ with the autonomous predefined-time differentiator in~\cite{Seeber2020ExactBound}. Here $L=1$, $y(t)=0.75\cos(t) + 0.025\sin(10t)+t$ and $z_0(0)=z_1(0)=10$. For all simulations $\kappa(t)$ is bounded by $\kappa_{\max}=29.49$.}
    \label{fig:Seeber_comparison}
\end{figure*}
\end{example}

Since the estimate of the \textit{UBST} in~\cite{Seeber2020ExactBound} is conservative, the resulting predefined-time exact differentiator is over-engineered and converges sooner than required, producing large estimation errors.
Using the algorithm from~\cite{Seeber2020ExactBound} as a base differentiator, in the redesigned algorithm we can tune $\alpha$ and $\beta$ to reduce the slack of the \textit{UBST} and to obtain a lower maximum error than in~\cite{Seeber2020ExactBound}. As it can be observed from Figure~\ref{fig:Seeber_comparison}, both the maximum error and the slack are significantly reduced using our approach.

\section{Conclusion}
\label{Sec:Conclu}

We have introduced a methodology to design predefined-time arbitrary-order exact differentiators. Our approach uses a class of time-varying gains (\textit{TVG}), that in previous approaches diverged to infinity. In our methodology, we provided sufficient conditions to obtain a predefined-time differentiator with bounded \textit{TVG}. Such boundedness of the \textit{TVG} is shown to be necessary and sufficient for uniform (with respect to time) Lyapunov stability of the origin of the differentiator error dynamics. Furthermore, we show that the desired upper bound for the convergence can be set arbitrarily tight, contrary to existing fixed-time differentiators which yield very conservative upper bounds for the convergence time or where such upper bound is unknown.

Future work may study the optimal choice of the redesigned differentiator parameters in the presence of measurement noise~\cite{Fraguela2012DesignNoise}, which could significantly improve the performance of the differentiators. Constructing discretization methods that preserve the performance of the differentiators are also needed~\cite{Carvajal-Rubio2019OnDifferentiators}.
Overall, our results demonstrate how time-varying gains provide a very flexible methodology to design arbitrary-order differentiation algorithms with fixed-time convergence, opening the door to apply them to solve control and estimation problems with time-constraints.

\section*{Acknowledgements}
Richard Seeber gratefully acknowledges funding by the Christian Doppler Research Association, the Austrian Federal Ministry for Digital and Economic Affairs and the National Foundation for Research, Technology and Development. Marco~Tulio~Angulo gratefully acknowledges the financial support from CONACyT A1-S-13909.

\appendix
\section{On admissible  correction functions}
\begin{lemma}
\label{Lem:Aux}
The correction functions $\{\phi_i\}_{i=0}^n$ as given in Table \ref{Tab:phi} are admissible correction functions. Namely, there exist an interval $\mathcal{I}_\phi\subseteq\mathbb{R}_+$ such that, for any $\alpha\in\mathcal{I}_\phi$ the base differentiator is an asymptotic differentiator for the class $\mathcal{Y}_{\mathcal{L}(t)}^{(n+1)}$, where $\mathcal{L}(t)=L\exp{-\alpha(n+1)t}$ and (A1) holds. Moreover, $T_f$ as given in Table \ref{Tab:phi} is an \textit{UBST} for the base differentiator.
\end{lemma}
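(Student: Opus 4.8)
The plan is to verify the claim case by case for the four entries of Table~\ref{Tab:phi}, in each case exhibiting the interval $\mathcal{I}_\phi$, establishing property (A1) (global asymptotic stability of \eqref{Eq:BaseDiff} for the shrinking class $\mathcal{Y}_{\mathcal{L}(t)}^{(n+1)}$ with $\mathcal{L}(t)=L\exp{-\alpha(n+1)t}$, together with the exponential decay estimate $\|e(t)\|<\gamma(e(0),\alpha)\exp{-\alpha(n+1)t}$), and finally checking that the stated $T_f$ is indeed an \textit{UBST}. The unifying trick for (A1) is a time-dependent coordinate change $\tilde e_i(t):=\exp{\alpha(n+1)t}\,e_i(t)$ (or, more precisely, a diagonal weighting $\tilde e_i = \kappa_0^{-(i+1)}\exp{\alpha(n+1)t}\,e_i$ with a constant $\kappa_0$ to be fixed), which converts the exponential-decay requirement into plain asymptotic stability of a system driven by a \emph{bounded} disturbance $\tilde d(t)=\exp{\alpha(n+1)t}d(t)$ with $|\tilde d(t)|\le L$. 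If in these transformed coordinates the correction functions reproduce (up to the homogeneity weights) the structure of a known asymptotic/finite-time/fixed-time differentiator, (A1) follows from the cited results.

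Concretely: for entry \textit{i)} (the linear differentiator), the transformed error dynamics are $\dot{\tilde e} = (A - \alpha(n+1)I + \text{(gain terms)})\tilde e + B_{n+1}\tilde d$ where the companion matrix with coefficients $r^{i+1}l_i$ and the shift $-\alpha(n+1)I$ combine, using $r\in\mathcal{I}_\phi=[0,r)$ and $\max_i\real{\lambda_i}=-(n+1)$, into a Hurwitz matrix; linear ISS then gives the exponential bound and $T_f=\infty$ is trivially an \textit{UBST} since the settling time is infinite. For entry \textit{ii)} (Levant's arbitrary-order differentiator), after the weighting the system becomes Levant's exact differentiator for a signal whose $(n+1)$-th derivative is bounded by $L$; I invoke the homogeneity/finite-time-stability results of~\cite{Levant2003,Cruz2018,Sanchez2016construction} to get finite-time convergence in the transformed time, hence $\tilde e\to0$ and therefore $\|e(t)\|\le\gamma\exp{-\alpha(n+1)t}$; again $T_f=\infty$ because the base differentiator need not be fixed-time. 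Entry \textit{iii)} is the $n=1$ uniform (fixed-time) exact differentiator of~\cite{Seeber2020ExactBound}: here the key input is that $T^*_{\max}$ is its least \textit{UBST}, so $T_f=T^*_{\max}<\infty$, and (A1) follows from the Lyapunov analysis in~\cite{Seeber2020ExactBound} after the same weighting (one must check the weighted gains still satisfy that paper's design inequality, which holds for all $\alpha\ge0$ since the weighting only rescales $L$). Entry \textit{iv)} is the $L=0$ fixed-time homogeneous observer of~\cite{Menard2017}: with $L=0$ the disturbance vanishes, so no weighting subtlety arises; fixed-time stability with the explicit \textit{UBST} $T_f=\tfrac{4}{\theta}((1-c)^{-1}+(b-1)^{-1})$ is read off from the bi-homogeneous Lyapunov estimate of~\cite{Menard2017} for $c$ close to $1$ from below and $b$ close to $1$ from above.

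I expect the main obstacle to be entry \textit{iii)} — verifying that applying the exponential weighting (equivalently, replacing the constant disturbance bound $L$ by the time-varying $\mathcal{L}(t)=L\exp{-\alpha(n+1)t}\le L$) does not break the delicate gain conditions of~\cite{Seeber2020ExactBound} and that the resulting convergence is still uniform with the \emph{same} \textit{UBST} $T^*_{\max}$; one has to argue that a disturbance with a smaller bound only helps, so the Lyapunov decrease and hence the settling-time estimate carry over verbatim. A secondary technical point is bookkeeping the homogeneity weights: the statement of (A1) demands the \emph{same} decay rate $\alpha(n+1)$ for every component $e_i$, whereas the natural homogeneous coordinates assign weight-dependent rates; this is reconciled by the diagonal factor $\kappa_0^{-(i+1)}$ and by absorbing the discrepancy into the constant $\gamma(e(0),\alpha)$, since $\|e(t)\|$ only needs an upper bound of the stated form. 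Everything else is a routine check that the chosen $\mathcal{I}_\phi$ is nonempty and that the cited constructions apply.
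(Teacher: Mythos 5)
Your treatment of entry \textit{i)} is sound and essentially equivalent to the paper's: the paper bounds $\|e(t)\|$ directly via variation of constants against the decaying disturbance bound $\mathcal{L}(s)=L\exp{-\alpha(n+1)s}$, which is the same computation as your shifted-matrix ISS argument. (Note, though, that the shift has the sign $A+\alpha(n+1)I$, not $A-\alpha(n+1)I$; it is precisely this sign that forces the restriction $\alpha<r$, i.e.\ $\mathcal{I}_\phi=[0,r)$.)

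The gap is in entries \textit{ii)}--\textit{iv)}. Your central claim --- that after the exponential weighting $\tilde e_i=\kappa_0^{-(i+1)}\exp{\alpha(n+1)t}\,e_i$ the system ``becomes'' Levant's (resp.\ Seeber et al.'s, M\'enard et al.'s) differentiator driven by a bounded disturbance --- is false. Differentiating the weight produces an additional term $+\alpha(n+1)\tilde e_i$ in each equation, and, unless one also reparametrizes time, the correction terms acquire time-varying gains because each $\phi_i$ is homogeneous of nonzero degree in $e_0$. The transformed system is therefore the original homogeneous differentiator \emph{plus an anti-damping linear term proportional to $\alpha$}; since the correction functions in \textit{ii)}--\textit{iv)} grow sublinearly in $e_0$ (negative homogeneity degree), that linear term dominates for large $\tilde e$, the transformed system is not globally finite-time (or even asymptotically) stable, and the cited results cannot be invoked for it. This is exactly the obstruction that the paper's main construction handles with the extra term $(\mathcal{U}-\alpha\mathcal{D})^{n+1}B_{n+1}e_0$ and the matrix $\mathcal{Q}$. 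The paper's actual proof of the lemma for \textit{ii)}--\textit{iv)} sidesteps all of this: since $\mathcal{L}(t)\le L$, the base differentiator is exact on $\mathcal{Y}^{(n+1)}_{\mathcal{L}(t)}\subseteq\mathcal{Y}^{(n+1)}_{L}$ by the cited references, so $e(t)\equiv 0$ for $t\ge\mathcal{T}(e(0))$; one then simply takes $\gamma(e(0),\alpha)=S(e(0))\exp{\alpha(n+1)\mathcal{T}(e(0))}$ with $S(e(0))=\sup\{\|e(t)\|:0\le t\le\mathcal{T}(e(0))\}$, and the exponential bound in (A1) holds trivially because any signal vanishing after finite time satisfies every exponential envelope up to a constant. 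Your worry about the gain conditions in entry \textit{iii)} is thus moot, and your (correct) remark that a smaller disturbance bound only helps is, once applied to the \emph{original, unweighted} error dynamics, essentially the whole argument.
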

\begin{proof}
To show that Table~\ref{Tab:phi}-\textit{i)} are admissible correction functions satisfying (A1) let $\mathcal{I}_\phi=[0,r)$, any $\alpha\in\mathcal{I}_\phi$ and consider the error dynamics of the differentiator under such linear correction functions given by
$
\dot{\mathbf{e}}=\tilde{\mathbf{A}}\mathbf{e}-\mathbf{b}_{n}d(t)
$
where $|d(t)|\leq\mathcal{L}(t)$, for all $t\geq 0$, $\tilde{\mathbf{A}}=[a_{i,j}]\in\mathbb{R}^{(n+1)\times (n+1)}$ defined by $a_{i,1}=-r^{i}l_{i-1}, a_{i,i+1}=1$ and $a_{i,j}=0$ elsewhere, i.e.,
\begin{equation*}
\tilde{\mathbf{A}} = \begin{bmatrix}
- r l_0 & 1 & 0 & \ldots & 0 \\
- r^2 l_1 & 0 & 1 & \ldots & 0 \\
\vdots & \vdots & \ddots & \ddots & \vdots \\
-r^{n} l_{n-1} & 0 & 0 & \ldots & 1 \\
-r^{n+1} l_n & 0 & 0 & \ldots & 0
\end{bmatrix}
\end{equation*}
and $\mathbf{b}_{n}=[0,\dots,0,1]^T\in\mathbb{R}^{(n+1)\times 1}$. Notice that the characteristic polynomial of $\tilde{\mathbf{A}}$ is $s^{n+1}+rl_0s^{n+1}+\cdots+r^{n+1}l_n$ with eigenvalue set $\{r\lambda_0,\ldots,r\lambda_n\}$ and $\max(\real{\lambda_0},\ldots,\real{\lambda_{n}})=-(n+1)$.
Moreover, 
$
\left\|\mathbf{e}(t)\right\|=\left\|\exp{\tilde{\mathbf{A}}t}\mathbf{e}(0) + \int_0^t \exp{\tilde{\mathbf{A}}(t-s)}\mathbf{b}_{n} d(s)\mathrm{d}s\right\|.
$
Additionally, there exists a constant $c\in\mathbb{R}$ such that $\|\exp{\tilde{\mathbf{A}}t}\mathbf{e}(0)\|\leq c~\exp{-r(n+1) t}\|\mathbf{e}(0)\|$. Consequently,
$$
\left\|\mathbf{e}(t)\right\|\leq c~\exp{-r(n+1) t}\|\mathbf{e}(0)\| + \frac{c L}{(r-\alpha)(n+1)}\left(\exp{-\alpha(n+1)t}- \exp{-r (n+1) t}\right)
$$
Furthermore, let $\gamma(\mathbf{e}(0),\alpha)=c\|\mathbf{e}(0)\|+cL(r-\alpha)^{-1}(n+1)^{-1}$. Then,
$\left\|\mathbf{e}(t)\right\|\leq \gamma(\mathbf{e}(0),\alpha)\exp{-\alpha( n+1) t}$ since $\exp{-r(n+1)t}<\exp{-\alpha(n+1)t}$. Hence, the correction functions $\{\phi_i\}_{i=0}^n$ as given in Table \ref{Tab:phi}-\textit{i}) are admissible.

It follows from~\cite{Levant2003} and \cite{Seeber2020ExactBound} that with correction functions $\{\phi_i\}_{i=0}^n$ in Table~\ref{Tab:phi}-\textit{ii})-\textit{iii}) the base differentiator is an exact differentiator for a class $\mathcal{Y}_L^{(n+1)}$ with $L\neq0$, while it follows from~\cite{Menard2017} that with correction functions in Table~\ref{Tab:phi}-\textit{iv}) the base differentiator is an exact differentiator for a class $\mathcal{Y}_L^{(n+1)}$ with $L=0$. Since, $\mathcal{L}(t)=L\exp{-\alpha(n+1)t}\leq L$, for all $t\geq0$, and any $\alpha\in\mathcal{I}_\phi=[0,\infty)$, with the correction functions in Table~\ref{Tab:phi}-\textit{ii})-\textit{vi}) the base differentiator is an exact differentiator for the class $\mathcal{Y}_{\mathcal{L}(t)}^{(n+1)}$.
Moreover, pick $\gamma(\mathbf{e}(0),\alpha) = S(\mathbf{e}(0))\exp{\alpha(n+1)\mathcal{T}(\mathbf{e}(0))}$ where $S(\mathbf{e}(0))=\sup\{\|\mathbf{e}(t)\|:0\leq t\leq \mathcal{T}(\mathbf{e}(0))\}$ and $\mathcal{T}(\mathbf{e}(0))$ is the settling time of $\mathbf{e}(t)$. Thus, for $0\leq t\leq \mathcal{T}(\mathbf{e}(0))$, 
$$
\gamma(\mathbf{e}(0),\alpha)\exp{-\alpha(n+1)t}= S(\mathbf{e}(0))\exp{\alpha(n+1)(\mathcal{T}(e_0) - t)} \geq S(\mathbf{e}(0))\geq \|\mathbf{e}(t)\|
$$
And $\gamma(\mathbf{e}(0),\alpha)\exp{-\alpha(n+1)t}\geq 0=\|\mathbf{e}(t)\|$ for $t\geq \mathcal{T}(\mathbf{e}(0))$. Hence, the correction functions $\{\phi_i\}_{i=0}^n$ as given in Table \ref{Tab:phi}-\textit{ii})-\textit{iv}) are admissible. The \textit{UBST} for the convergence time of the error dynamics of the base differentiator with correction functions given in Table~\ref{Tab:phi}-\textit{iii}) and Table~\ref{Tab:phi}-\textit{iv}) is given in~\cite{Seeber2020ExactBound} and~\cite{Menard2017}, respectively.
\end{proof}

\section{Time-scale transformations}
\label{Appendix:PrelimTimScale}

The trajectories corresponding to the system solutions of~\eqref{Eq:DiffErr} are interpreted, in the sense of differential geometry~\cite{Kuhnel2015DifferentialGeometry}, as regular parametrized curves. Since we apply regular parameter transformations over the time variable, this reparametrization is referred to as time-scale transformation.

\begin{definition}\cite[Definition~2.1]{Kuhnel2015DifferentialGeometry}
\label{Def:RegularParamCurve}
A regular parametrized curve, with parameter $t$, is a $C^1(\mathcal{I})$ immersion $c: \mathcal{I}\to \mathbb{R}$, defined on a real interval $\mathcal{I} \subseteq \mathbb{R}$. This means that $\frac{\mathrm{d}c}{\mathrm{d}t}\neq 0$ holds everywhere.
\end{definition}

\begin{definition}\cite[Pg.~8]{Kuhnel2015DifferentialGeometry}
\label{Def:RegularCurve}
A regular curve is an equivalence class of regular parametrized curves, where the equivalence relation is given by regular (orientation preserving) parameter transformations $\varphi$, where $\varphi:~\mathcal{I}~\to~\mathcal{I}'$ is $C^1(\mathcal{I})$, bijective and $\frac{\mathrm{d}\varphi}{\mathrm{d}t}>0$. Therefore, if $c:\mathcal{I}\to\mathbb{R}^n$ is a regular parametrized curve and $\varphi:\mathcal{I}\to \mathcal{I}'$ is a regular parameter transformation, then $c$  and  $c\circ\varphi:\mathcal{I}'\to\mathbb{R}^n$ are considered to be equivalent.
\end{definition}

\begin{lemma}\cite{aldana2019design}
\label{Lemma:ParTrans} The bijective function $\varphi(t)=\tau=-\alpha^{-1}\ln(1-\eta t/T_c)$, defines a parameter transformation with $\varphi^{-1}(\tau)=t=\eta^{-1}T_c(1-\exp{-\alpha\tau})$ as its inverse mapping. Moreover, $\left. \frac{\mathrm{d}t}{\mathrm{d}\tau}\right|_{\tau=\varphi(t)}=\kappa(t)^{-1}$, for $t\in[0,T_c)$ and $\kappa$ given in \eqref{eq:kappa}.
\end{lemma}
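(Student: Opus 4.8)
The plan is to treat this as a direct verification resting on elementary properties of the logarithm and the inverse function theorem, since every claim reduces to an explicit computation. First I would confirm that the stated inverse is correct: starting from $\tau = -\alpha^{-1}\ln(1 - \eta t/T_c)$, multiply by $-\alpha$ and exponentiate to obtain $\exp{-\alpha\tau} = 1 - \eta t/T_c$, then solve for $t$ to get $t = \eta^{-1}T_c(1 - \exp{-\alpha\tau})$, which is precisely the claimed $\varphi^{-1}$; a one-line substitution then shows $\varphi\circ\varphi^{-1}$ and $\varphi^{-1}\circ\varphi$ are the identity on the relevant intervals.

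Next I would check that $\varphi$ qualifies as a regular, orientation-preserving parameter transformation in the sense of Definition~\ref{Def:RegularCurve}. The argument $1 - \eta t/T_c$ is strictly positive for $t\in[0,T_c)$, because $\eta = 1 - \exp{-\alpha T_f}\in(0,1]$ forces $\eta t \le t < T_c$; hence $\varphi$ is $C^1$ on $[0,T_c)$ as a composition of $C^1$ maps. Differentiating by the chain rule gives
$\frac{d\varphi}{dt} = -\alpha^{-1}\cdot\frac{-\eta/T_c}{1-\eta t/T_c} = \frac{\eta}{\alpha(T_c-\eta t)}$,
which is strictly positive on $[0,T_c)$; strict monotonicity gives injectivity, and evaluating at the endpoints gives the image. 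For completeness I would record that the range is $[0,T_f)$: at $t=0$ we get $\tau=0$, and as $t\uparrow T_c$ the argument tends to $1-\eta = \exp{-\alpha T_f}$, so $\tau\uparrow T_f$ (with the obvious reading $T_f=\infty$, $\eta=1$).

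Finally, comparing $\frac{d\varphi}{dt} = \frac{\eta}{\alpha(T_c-\eta t)}$ with the definition of $\kappa$ in~\eqref{eq:kappa} shows $\frac{d\varphi}{dt} = \kappa(t)$ for $t\in[0,T_c)$. Since $\varphi$ is a $C^1$ bijection with nonvanishing derivative, the inverse function theorem yields $\left.\frac{dt}{d\tau}\right|_{\tau=\varphi(t)} = \bigl(\tfrac{d\varphi}{dt}\bigr)^{-1} = \kappa(t)^{-1}$, as claimed. I do not expect a genuine obstacle here: the only points needing a little care are tracking the interval on which $1 - \eta t/T_c > 0$ (so that the logarithm and its derivative are well defined and $\varphi$ is orientation preserving) and using the identity $1-\eta = \exp{-\alpha T_f}$ to identify the range; the rest is routine chain-rule bookkeeping.
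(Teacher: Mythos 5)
Your verification is correct: the explicit inversion, the positivity of $1-\eta t/T_c$ on $[0,T_c)$ (using $\eta=1-\exp{-\alpha T_f}\in(0,1]$ and $\alpha>0$), the chain-rule computation $\frac{d\varphi}{dt}=\frac{\eta}{\alpha(T_c-\eta t)}=\kappa(t)$, and the inverse-function-theorem step giving $\left.\frac{dt}{d\tau}\right|_{\tau=\varphi(t)}=\kappa(t)^{-1}$ are all accurate, and your identification of the image $[0,T_f)$ via $1-\eta=\exp{-\alpha T_f}$ is a correct additional detail. The paper itself gives no proof of this lemma --- it is imported from \cite{aldana2019design} --- so there is nothing to contrast with; your direct computation is precisely the routine argument one would expect and matches the requirements of Definitions~\ref{Def:RegularParamCurve} and~\ref{Def:RegularCurve}.
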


\section{Proof of the main result}
\label{App:ProofMain}
Our strategy to prove the main result is as follows. First, we build an ``auxiliary system" with the selection of correction functions $\{\phi_i\}_{i=0}^n$. Then, we show that the auxiliary system and the differentiation error dynamics~\eqref{Eq:DiffErr} are related by a coordinate change and the time-scale transformation given in Lemma~\ref{Lemma:ParTrans}. Thus, the settling-time for~\eqref{Eq:DiffErr} can be obtained in the basis of the settling-time of the auxiliary system and the time-scaling.

\begin{lemma}
\label{Lemma:TimeScale}
Suppose that the conditions of Theorem~\ref{Th:MainResult} are fulfilled.
Then, the origin of the auxiliary system
\begin{align}
\label{Eq:AuxSyst}
    \frac{\mathrm{d}\chi_i}{\mathrm{d}\tau}&=-\phi_i(\chi_0)+\chi_{i+1}, \text{ for } i=0,\ldots,n-1,\\
    \frac{\mathrm{d}\chi_n}{\mathrm{d}\tau}&=-\phi_n(\chi_0)+\pi(\tau),
\end{align}
is globally asymptotically stable for $\pi(\tau)=\beta^{-1}(\alpha T_c/\eta)^{n+1}\exp{-\alpha(n+1) \tau}d(\varphi^{-1}(\tau))$ where $\varphi(t)=-\alpha^{-1}\ln(1-\eta t/T_c)$ is the parameter transformation from Lemma \ref{Lemma:ParTrans} in Appendix \ref{Appendix:PrelimTimScale}. Moreover, let $\mathbf{\chi}=[\chi_0,\ldots,\chi_n]^T$, then for every solution $\mathbf{e}(t)$ of \eqref{Eq:DiffErr} there is a solution $\mathbf{\chi}(\tau)$ of~\eqref{Eq:AuxSyst} such that
\begin{equation}
\label{eq:change_of_coordinates}
    \mathbf{e}(t) = \beta \mathbf{K}(t) \mathbf{Q} \mathbf{\chi}(\varphi(t)),
\end{equation}
holds for all $t \in [0, T_c)$ where $\mathbf{K}(t):=\textrm{diag}(1,\kappa(t),\ldots,\kappa(t)^{n})$.
That is, the curves $\beta^{-1} \mathbf{Q}^{-1} \mathbf{K}(t)^{-1} \mathbf{e}(t)$ and $\mathbf{\chi}(\tau)$ with $\mathbf{e}(0)=\beta\mathbf{K}(0)\mathbf{Q}\mathbf{\chi}(0)$ are equivalent curves under the time-scale transformation $\tau=\varphi(t)$. Thus, the redesigned differentiator's error dynamics~\eqref{Eq:Diff}, is asymptotically stable.
\end{lemma}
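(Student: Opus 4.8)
The plan is to verify directly that the stated change of coordinates~\eqref{eq:change_of_coordinates} conjugates the error dynamics~\eqref{Eq:DiffErr}, restricted to $t\in[0,T_c)$ where $h_i(e_0,t;T_c)=\kappa(t)^{i+1}f_i(e_0)$, to the auxiliary system~\eqref{Eq:AuxSyst}, and then to import global asymptotic stability of the latter from hypothesis (A1). On $[0,T_c)$ the gain $\kappa$ of~\eqref{eq:kappa} is $C^1$ and positive; by Lemma~\ref{Lemma:ParTrans} the map $\varphi$ is a $C^1$ orientation-preserving bijection of $[0,T_c)$ onto $[0,\infty)$, and for each fixed $t$ the linear map $\chi\mapsto\beta\Lambda(t)\mathcal{Q}\chi$ is invertible (one checks that $\mathcal{Q}$ is unit lower-triangular, hence invertible, and $\Lambda(t)$ is invertible). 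Therefore Filippov solutions of~\eqref{Eq:DiffErr} and of~\eqref{Eq:AuxSyst} correspond bijectively under~\eqref{eq:change_of_coordinates}, and it suffices to check the identity along absolutely continuous solutions.

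First I would record the elementary identities needed to differentiate~\eqref{eq:change_of_coordinates}. From $\kappa(t)=\eta/(\alpha(T_c-\eta t))$ one gets $\dot\kappa=\alpha\kappa^2$, hence $\dot\Lambda=\alpha\kappa\,\Lambda\mathcal{D}$; Lemma~\ref{Lemma:ParTrans} gives $d\tau/dt=\kappa(t)$; the shift structure of $\mathcal{U}$ yields $\mathcal{U}\Lambda=\kappa\,\Lambda\mathcal{U}$ and $\Lambda^{-1}B_{n+1}=\kappa^{-n}B_{n+1}$; and, since the first row of $\mathcal{Q}$ equals $B_1^T$, the first component of~\eqref{eq:change_of_coordinates} reads $e_0=\beta\chi_0$, so that by the definition of $f$ one has $f(\beta\chi_0)=\beta\mathcal{Q}\Phi(\chi_0)+\beta(\mathcal{U}-\alpha\mathcal{D})^{n+1}B_{n+1}\chi_0$. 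Writing~\eqref{Eq:DiffErr} on $[0,T_c)$ in vector form as $\dot e=\mathcal{U}e-\kappa(t)\Lambda(t)f(e_0)+B_{n+1}d(t)$, substituting $e=\beta\Lambda\mathcal{Q}\chi$, differentiating, dividing by the common scalar $\beta\kappa$, and multiplying by $\Lambda^{-1}$, everything collapses to
$$
\mathcal{Q}\frac{d\chi}{d\tau}=(\mathcal{U}-\alpha\mathcal{D})\mathcal{Q}\chi-\mathcal{Q}\Phi(\chi_0)-(\mathcal{U}-\alpha\mathcal{D})^{n+1}B_{n+1}\chi_0+\tfrac{1}{\beta\kappa^{n+1}}B_{n+1}d(t).
$$

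The crux is the algebraic identity $(\mathcal{U}-\alpha\mathcal{D})\mathcal{Q}=\mathcal{Q}\mathcal{U}+(\mathcal{U}-\alpha\mathcal{D})^{n+1}B_{n+1}B_1^T$, which follows by inspecting the columns of $\mathcal{Q}=[(\mathcal{U}-\alpha\mathcal{D})^nB_{n+1};\dots;(\mathcal{U}-\alpha\mathcal{D})B_{n+1};B_{n+1}]$ (right multiplication by $\mathcal{U}$ shifts the columns, so $(\mathcal{U}-\alpha\mathcal{D})\mathcal{Q}$ and $\mathcal{Q}\mathcal{U}$ agree except in the first column), together with $\mathcal{Q}B_{n+1}=B_{n+1}$. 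Inserting this cancels the two $(\mathcal{U}-\alpha\mathcal{D})^{n+1}B_{n+1}\chi_0$ terms; writing the last term as $\mathcal{Q}\big(\tfrac{1}{\beta\kappa^{n+1}}B_{n+1}d(t)\big)$ and multiplying by $\mathcal{Q}^{-1}$ leaves exactly~\eqref{Eq:AuxSyst} with $\pi(\tau)=\big(\beta\,\kappa(\varphi^{-1}(\tau))^{n+1}\big)^{-1}d(\varphi^{-1}(\tau))$. Finally, along $t=\varphi^{-1}(\tau)$ one has $T_c-\eta t=T_c\exp{-\alpha\tau}$, so $\kappa=\tfrac{\eta}{\alpha T_c}\exp{\alpha\tau}$ and thus $\pi(\tau)=\beta^{-1}(\alpha T_c/\eta)^{n+1}\exp{-\alpha(n+1)\tau}d(\varphi^{-1}(\tau))$, as claimed.

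It then remains to deduce stability. Since $|d(t)|=|y^{(n+1)}(t)|\le L$ for $y\in\mathcal{Y}_L^{(n+1)}$ and $\beta\ge(\alpha T_c/\eta)^{n+1}$, we get $|\pi(\tau)|\le L\exp{-\alpha(n+1)\tau}=\mathcal{L}(\tau)$; integrating $-\pi$ suitably $n+1$ times produces a signal $\tilde y\in\mathcal{Y}_{\mathcal{L}(\tau)}^{(n+1)}$ with $\tilde y^{(n+1)}=-\pi$, so~\eqref{Eq:AuxSyst} is precisely the error dynamics of the base differentiator~\eqref{Eq:BaseDiff} fed with $\tilde y$; hypothesis (A1) then gives global asymptotic stability of its origin and the bound $\|\chi(\tau)\|<\gamma(\chi(0),\alpha)\exp{-\alpha(n+1)\tau}$. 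Transporting this back through~\eqref{eq:change_of_coordinates} and using $\|\Lambda(t)\|\le\kappa(t)^n=(\tfrac{\eta}{\alpha T_c})^n\exp{\alpha n\tau}$ yields $\|e(t)\|\le C\exp{-\alpha\tau}\to0$ as $t\to T_c^-$ for a constant $C$, so $e$ extends continuously with $e(T_c)=0$; for $t\ge T_c$ the dynamics~\eqref{Eq:DiffErr} run Levant's exact differentiator with gains $\{g_i(\cdot;L)\}$, which keeps the error at the origin, giving asymptotic stability of the redesigned error dynamics. The \textbf{main obstacle} is the bookkeeping of the middle two paragraphs: pushing the time- and state-dependent scaling $\beta\Lambda(t)\mathcal{Q}$ and the reparametrization $\varphi$ simultaneously through the linear part $\mathcal{U}e$ and the nonlinear term $f$, so that every power of $\kappa$ and every matrix factor cancels except the single combination producing $\pi$ --- this rests on the three structural facts $\dot\Lambda=\alpha\kappa\Lambda\mathcal{D}$, $\mathcal{U}\Lambda=\kappa\Lambda\mathcal{U}$, and the shift identity for $\mathcal{Q}$ fitting together. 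A secondary point requiring care is the legitimacy of the transformation at the level of Filippov solutions near $t=T_c$ and the matching of the limit $e(T_c)=0$ with the switched dynamics.
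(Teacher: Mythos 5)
Your proof is correct and follows essentially the same route as the paper: the same change of coordinates $e=\beta\Lambda(t)\mathcal{Q}\chi(\varphi(t))$, the same structural identities ($\dot\Lambda=\alpha\kappa\Lambda\mathcal{D}$, $\Lambda^{-1}\mathcal{U}\Lambda=\kappa\mathcal{U}$, $\Lambda^{-1}B_{n+1}=\kappa^{-n}B_{n+1}$), the same key commutation relation for $\mathcal{Q}$ (your $(\mathcal{U}-\alpha\mathcal{D})\mathcal{Q}=\mathcal{Q}\mathcal{U}+(\mathcal{U}-\alpha\mathcal{D})^{n+1}B_{n+1}B_1^T$ is the paper's $\mathcal{Q}\mathcal{U}=(\mathcal{U}-\alpha\mathcal{D})\mathcal{Q}+\mathcal{Q}\mathcal{A}$), and the same appeal to (A1) to transport the exponential decay of $\chi$ back through $\Lambda(t)$. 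The only differences are presentational — you substitute $e=\beta\Lambda\mathcal{Q}\chi$ forward where the paper differentiates $\tilde\chi=\beta^{-1}\mathcal{Q}^{-1}\Lambda^{-1}e$, and you are slightly more explicit about the Filippov-solution correspondence, the bound $|\pi(\tau)|\le\mathcal{L}(\tau)$ via $\beta\ge(\alpha T_c/\eta)^{n+1}$, and the behavior at and after $t=T_c$.
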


\begin{proof}
Denote $\tilde{\mathbf{\chi}}(t) :=  \beta^{-1}\mathbf{Q}^{-1}\mathbf{K}(t)^{-1} \mathbf{e}(t)$ and define $\mathbf{A}:=-\mathbf{Q}^{-1}(\mathbf{U}-\alpha \mathbf{D})^{n+1}\mathbf{b}_{n} \mathbf{b}_1^T$, where $\mathbf{b}_1 = [1,0,\dots,0]^T$. Due to the definition of $\kappa(t)$, we have $\kappa(t)^{-1}\dot{\kappa}(t) = \alpha\kappa(t)$ and $$\frac{\mathrm{d}}{\mathrm{d}t}\mathbf{K}(t)^{-1} = -\kappa(t)^{-1}\dot{\kappa}(t)\mathbf{D}\mathbf{K}(t)^{-1} = -\alpha \kappa(t)\mathbf{D}\mathbf{K}(t)^{-1}.$$ 
We may then write \eqref{Eq:DiffErr} as 
\begin{equation}
\label{Eq:Transfom}
\dot{e} = -\kappa(t)\mathbf{K}(t)\mathbf{Q}(\beta\Phi(\beta^{-1}e_0) - \mathbf{A}e) + \mathbf{U}e + \mathbf{b}_{n}d,
\end{equation}
in the interval $t\in[0,T_c)$.
Note that $\mathbf{Q}$, by construction, is a lower triangular matrix with ones in the main diagonal.
Hence, $e_0 = \beta\tilde{\chi}_0$ and $\mathbf{Q}^{-1}\mathbf{K}(t)^{-1}\mathbf{b}_{n} = \kappa(t)^{-n}\mathbf{b}_{n}$.
Furthermore, note that $\mathbf{Q} \mathbf{U} = (\mathbf{U} - \alpha \mathbf{D}) \mathbf{Q} + \mathbf{Q} \mathbf{A}$, and hence $\mathbf{Q}^{-1}(\mathbf{U}-\alpha\mathbf{D})\mathbf{Q} = \mathbf{U}-\mathbf{A}$.
Using these relations, as well as $\mathbf{K}(t)^{-1}\mathbf{U}\mathbf{K}(t) = \kappa(t)\mathbf{U}$, we obtain the dynamics of $\tilde{\mathbf{\chi}}$ in $[0, T_c)$ as
\begin{align}
    \begin{aligned}
    \dot{\tilde{\mathbf{\chi}}} &= \beta^{-1}\mathbf{Q}^{-1}\left(\frac{\mathrm{d}}{\mathrm{d}t}\mathbf{K}(t)^{-1}\right)e + \beta^{-1}\mathbf{Q}^{-1}\mathbf{K}(t)^{-1}\dot{e}\\ &=\kappa(t)\left(-\Phi(\tilde{\chi}_0) + \mathbf{U}\tilde{\mathbf{\chi}} + \beta^{-1}\kappa(t)^{-(n+1)}\mathbf{b}_{n}d\right).
    \end{aligned}
\end{align}
Now, consider the time-scale transformation $\tau=\varphi(t)$ to obtain an expression for $\frac{\mathrm{d}\tilde{\mathbf{\chi}}}{\mathrm{d}\tau} = \frac{\mathrm{d}}{\mathrm{d}\tau}\tilde{\mathbf{\chi}}\left(\varphi^{-1}(\tau)\right)$ which yields
$$
\frac{\mathrm{d}}{\mathrm{d}\tau}\tilde{\mathbf{\chi}} = -\Phi(\tilde{\chi}_0) + \mathbf{U}\tilde{\mathbf{\chi}} + \mathbf{b}_{n}\pi(\tau),
$$
with $\pi(\tau) = \beta^{-1}\kappa\left(\varphi^{-1}(\tau)\right)^{-(n+1)}d\left(\varphi^{-1}(\tau)\right) = \beta^{-1}(\alpha T_c/\eta)^{n+1}\exp{-\alpha(n+1)\tau}d(\varphi^{-1}(\tau))$. Comparing this to \eqref{Eq:AuxSyst} and by using $\tilde{\mathbf{\chi}}(0) = {\chi}(0)$, one can see that $\mathbf{\chi}(\tau) = \tilde{\mathbf{\chi}}\left(\varphi^{-1}(\tau)\right)$ is a solution of \eqref{Eq:AuxSyst}. Now, let
$\tilde{\mathbf{e}}(\tau):=\mathbf{e}(\varphi^{-1}(\tau))=\beta \mathbf{K}(\varphi^{-1}(\tau)) \mathbf{Q} \mathbf{\chi}(\tau)$ and notice that $\kappa(\varphi^{-1}(\tau)) =  (\alpha T_c/\eta)^{-1}\exp{\alpha \tau}$. Thus, 
\begin{align}
\mathbf{K}(\varphi^{-1}(\tau))=\textrm{diag}(1,(\alpha T_c/\eta)^{-1}\exp{\alpha\tau},\ldots,(\alpha T_c/\eta)^{-n}\exp{\alpha n\tau}).
\end{align}

Since, \eqref{Eq:AuxSyst} is asymptotically stable and by property (A1), $\mathbf{\chi}(\tau)$ satisfies
$$
\|\mathbf{\chi}(\tau)\|\leq \gamma(\mathbf{\chi}(0),\alpha) \exp{-\alpha(n+1)\tau} 
$$ for some  $\gamma(\mathbf{\chi}(0),\alpha)>0$. Moreover, we have $\|\tilde{\mathbf{e}}(\tau)\|\leq \beta(\alpha T_c/\eta)^{-n} \overline{\sigma}(\mathbf{Q})\exp{\alpha n\tau}\|\mathbf{\chi}(\tau)\|$ by the Rayleigh inequality \cite[Theorem 4.2.2]{Horn2012MatrixAnalysis} where $\overline{\sigma}(\mathbf{Q})$ and $(\alpha T_c/\eta)^{-n}\exp{\alpha n\tau}$ are the largest singular values of $\mathbf{Q}$ and $\mathbf{K}(\varphi^{-1}(\tau))$ respectively, for sufficiently large $\tau$. Then, 
$$
\lim_{\tau\to\infty}\|\tilde{\mathbf{e}}(\tau)\|\leq \lim_{\tau\to\infty}\frac{\beta \gamma(\mathbf{\chi}(0),\alpha)\overline{\sigma}(\mathbf{Q})}{(\alpha T_c/\eta)^n}\exp{-\alpha\tau}=0
$$
Thus, the redesigned differentiator's error dynamics~\eqref{Eq:DiffErr} is asymptotically stable.
\end{proof}

\begin{lemma}
\label{le:settling_time}
Suppose that functions $\{\phi_i\}_{i=0}^n$ are admissible and that the conditions of Theorem~\ref{Th:MainResult} are fulfilled. Let $\mathcal{T}(\mathbf{\chi}(0))$ be the settling time function of system~\eqref{Eq:AuxSyst}. Then, \eqref{Eq:DiffErr} is fixed-time stable with settling time function given by
\begin{equation}
\label{eq:settling_time}
T(\mathbf{e}(0))=\eta^{-1}T_c(1-\exp{-\alpha\mathcal{T}(\mathbf{\chi}(0))}),
\end{equation}
where $\mathbf{\chi}(0)=\beta^{-1}\mathbf{Q}^{-1}\mathbf{K}(0)^{-1}\mathbf{e}(0)$.
\end{lemma}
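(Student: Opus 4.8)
The plan is to transport the error dynamics~\eqref{Eq:DiffErr} to the auxiliary system~\eqref{Eq:AuxSyst}, where admissibility controls the settling time, and then pull the result back through the singular time‑scale map of Lemma~\ref{Lemma:ParTrans}. Concretely, I would first invoke Lemma~\ref{Lemma:TimeScale}: for every $y\in\mathcal Y_L^{(n+1)}$ and every solution $e(t)$ of~\eqref{Eq:DiffErr} on $[0,T_c)$ one has $e(t)=\beta\Lambda(t)\mathcal{Q}\chi(\varphi(t))$, where $\chi$ solves~\eqref{Eq:AuxSyst} with $\chi(0)=\beta^{-1}\mathcal{Q}^{-1}\Lambda(0)^{-1}e(0)$ and $\varphi(t)=-\alpha^{-1}\ln(1-\eta t/T_c)$. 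Since on $[0,T_c)$ both $\mathcal{Q}$ (lower triangular with unit diagonal) and $\Lambda(t)=\mathrm{diag}(1,\kappa(t),\dots,\kappa(t)^n)$ (with $0<\kappa(t)<\infty$) are invertible, it follows that $e(t)=0$ if and only if $\chi(\varphi(t))=0$, and that $e(0)\mapsto\chi(0)$ is a linear bijection of $\mathbb{R}^{n+1}$.

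Next I would bound the settling time function $\mathcal T$ of~\eqref{Eq:AuxSyst}. Its driving term is $\pi(\tau)=\beta^{-1}(\alpha T_c/\eta)^{n+1}\exp{-\alpha(n+1)\tau}\,d(\varphi^{-1}(\tau))$ with $|d(t)|=|y^{(n+1)}(t)|\le L$; since $\beta\ge(\alpha T_c/\eta)^{n+1}$ this gives $|\pi(\tau)|\le L\exp{-\alpha(n+1)\tau}=\mathcal{L}(\tau)$. Hence~\eqref{Eq:AuxSyst} is exactly the error dynamics of the base differentiator~\eqref{Eq:BaseDiff} driven by an admissible disturbance of class $\mathcal Y_{\mathcal L(t)}^{(n+1)}$, so property~(A1) (cf.\ Lemma~\ref{Lem:Aux}) yields global asymptotic stability of its origin, and, since $T_f$ is a settling‑time bound of the base differentiator for this class, $\mathcal{T}(\chi(0))\le T_f$ for all $\chi(0)$ (trivially when $T_f=\infty$).

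Then I would push the settling time through $\varphi$. By Lemma~\ref{Lemma:ParTrans}, $\varphi$ is an increasing bijection of $[0,T_c)$ onto $[0,T_f)$ (onto $[0,\infty)$ when $T_f=\infty$, in which case $\eta=1$), with $\varphi^{-1}(\tau)=\eta^{-1}T_c(1-\exp{-\alpha\tau})$. Fix $y$. If $\chi$ hits the origin at first time $\tau^\star\le T_f$, then by the coordinate change $e(t)=0$ exactly for $t\in[\varphi^{-1}(\tau^\star),T_c)$ and $e(t)\ne0$ for $t<\varphi^{-1}(\tau^\star)$; if the base differentiator is only asymptotically stable ($\mathcal{T}(\chi(0))=\infty$, possible solely when $T_f=\infty$), then $e(t)\ne0$ on $[0,T_c)$ but $\lim_{t\to T_c^-}e(t)=0$ by the decay estimate $\|e(\varphi^{-1}(\tau))\|\le C\exp{-\alpha\tau}$ from the proof of Lemma~\ref{Lemma:TimeScale}; and when $\mathcal{T}(\chi(0))=T_f<\infty$, $\lim_{t\to T_c^-}e(t)=\beta\Lambda(T_c)\mathcal{Q}\chi(T_f)=0$. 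In each case $e(T_c)=0$ by absolute continuity of Filippov solutions, and for $t\ge T_c$ the dynamics~\eqref{Eq:DiffErr} is Levant's exact differentiator with parameter $\max(L,\mu)\ge L$, whose origin is a Filippov equilibrium; hence $e(t)=0$ for all $t\ge T_c$. Taking infima over $y\in\mathcal Y_L^{(n+1)}$ consistently on both sides of the coordinate change, and using the bijection $e(0)\leftrightarrow\chi(0)$, gives $T(e(0))=\varphi^{-1}(\mathcal{T}(\chi(0)))=\eta^{-1}T_c(1-\exp{-\alpha\mathcal{T}(\chi(0))})$ with the convention $\exp{-\alpha\cdot\infty}=0$, which is~\eqref{eq:settling_time}. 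Finally $\mathcal{T}(\chi(0))\le T_f$ forces $T(e(0))\le\eta^{-1}T_c(1-\exp{-\alpha T_f})=T_c$, so together with the global asymptotic stability already established, the origin of~\eqref{Eq:DiffErr} is fixed‑time stable and $T_c$ is an \textit{UBST}.

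The step I expect to be the main obstacle is the third one: the time‑scale map $\varphi$ is singular, compactifying the finite horizon $[0,T_c)$ onto $[0,T_f)$ (with $T_f$ possibly infinite), so one must argue carefully that \emph{asymptotic} convergence of~\eqref{Eq:AuxSyst} as $\tau\to\infty$ becomes \emph{exact} convergence of~\eqref{Eq:DiffErr} at the finite time $T_c$. This amounts to controlling the indeterminate product $\Lambda(t)\chi(\varphi(t))\sim\infty\cdot0$ as $t\to T_c^-$ (for which the explicit form of $\varphi^{-1}$ and the exponential decay estimate of Lemma~\ref{Lemma:TimeScale} are essential), and to checking that the switch to Levant's differentiator at $T_c$ keeps $e=0$ rather than letting the bounded disturbance $d(t)$ reopen the error.
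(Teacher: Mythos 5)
Your proposal is correct and follows essentially the same route as the paper: the paper's own proof is a one-line appeal to the curve equivalence of Lemma~\ref{Lemma:TimeScale}, transporting the settling time of the auxiliary system~\eqref{Eq:AuxSyst} through $\varphi^{-1}$ to obtain~\eqref{eq:settling_time}. You simply fill in the details the paper leaves implicit (the bound $|\pi(\tau)|\le\mathcal L(\tau)$, the indeterminate limit $\Lambda(t)\chi(\varphi(t))$ as $t\to T_c^-$, and invariance of the origin under the switch to Levant's differentiator for $t\ge T_c$), which is a more careful rendering of the same argument.
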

\begin{proof}
It follows from the equivalence of curves, given in Lemma~\ref{Lemma:TimeScale}, under the time-scale transformation $\tau=\varphi(t)$, that since $\mathbf{\chi}(\tau)$ reaches the origin as $\tau\to \mathcal{T}(\mathbf{\chi}(0))$, where $ \mathbf{\chi}(0)=\beta^{-1}\mathbf{Q}^{-1}\mathbf{K}(0)^{-1}\mathbf{e}(0)$, then, $\mathbf{e}(t)$ reaches the origin as $t\to\varphi^{-1}(\mathcal{T}(\mathbf{\chi}(0)))$. Thus, $T(\mathbf{e}(0))$ satisfies~\eqref{eq:settling_time}.

\end{proof}

Using these results, we are now ready to show Theorem~\ref{Th:MainResult}.

\begin{proof} (Of Theorem~\ref{Th:MainResult})
Due to Lemma \ref{le:settling_time}, the settling time function of \eqref{eq:settling_time} satisfies $T(\mathbf{e}(0))\leq T_c$. Therefore, \eqref{Eq:DiffErr}  is fixed time stable with $T_c$ as an \textit{UBST}.
\end{proof}

\subsection{Proof of the propositions}
\label{appendix_propositions}

Before showing Proposition \ref{Lemma:Uniform}, we first provide an auxiliary lemma.

\begin{lemma}
\label{lem:deltas}
Consider admissible correction functions $\{\phi\}_{i=0}^n$.
Then, for all $\delta_0 > 0$ there exists a non-zero $w \in [-\delta_0, \delta_0]$ such that no $\gamma$ exists satisfying $\Phi(w) = w \mathbf{U} \mathbf{Q}^{-1} \mathbf{b}_{0} - \gamma \mathbf{Q}^{-1} \mathbf{b}_{0}$, where $\mathbf{b}_{0}$ is defined in the notation section.
\end{lemma}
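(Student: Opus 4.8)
We may assume $n\geq 1$; for $n=0$ the matrix $\mathcal U$ is zero and $\gamma=-\phi_0(w)$ solves the equation for every $w$, and this degenerate case is precisely the one set aside in Proposition~\ref{Lemma:Uniform}. I would argue by contradiction: assume there is a $\delta_0>0$ such that for every nonzero $w$ with $|w|\leq\delta_0$ some $\gamma=\gamma(w)$ satisfies $\Phi(w)=w\,\mathcal U\mathcal Q^{-1}B_1-\gamma\,\mathcal Q^{-1}B_1$, and derive a violation of the admissibility property (A1). The first step is to make $u:=\mathcal Q^{-1}B_1$ explicit. Since $\mathcal Q$ is lower triangular with unit diagonal, $u_1=1$; moreover $B_{n+1}$ is a cyclic vector of $\mathcal U-\alpha\mathcal D$ (the Krylov matrix $\mathcal Q$ is invertible) and $B_1$ spans the one-dimensional kernel of $\mathcal U-\alpha\mathcal D$ (its eigenvalues $0,-\alpha,\dots,-n\alpha$ are distinct), so writing $\mathcal Q u=B_1$ as a polynomial identity in $\mathcal U-\alpha\mathcal D$ applied to $B_{n+1}$ and dividing by the characteristic polynomial $p(s)=\det(sI-(\mathcal U-\alpha\mathcal D))=\prod_{k=0}^{n}(s+k\alpha)=s^{n+1}+c_ns^{n}+\dots+c_1s$ (so $c_0=0$) forces $u=(1,c_n,c_{n-1},\dots,c_1)^{T}$, hence $\mathcal U u=(c_n,c_{n-1},\dots,c_1,0)^{T}$. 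Reading the assumed equation componentwise, the $i=0$ row gives $\gamma(w)=c_nw-\phi_0(w)$; substituting this into the rows $i=1,\dots,n$ turns the hypothesis into the pointwise constraint
\begin{equation}
\label{eq:plan-constraint}
\phi_i(w)=c_{n+1-i}\,\phi_0(w)+\bigl(c_{n-i}-c_{n+1-i}c_n\bigr)w,\qquad i=1,\dots,n,
\end{equation}
valid for all $w$ with $0<|w|\leq\delta_0$ (with the convention $c_0:=0$).

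The second step is to feed~\eqref{eq:plan-constraint} into the base differentiator's error dynamics~\eqref{Eq:BaseDiff} for the admissible signal $y\equiv 0$ (so $d\equiv 0$) and to apply the linear change of coordinates $\hat e_i:=e_i-c_{n+1-i}e_0$, $i=1,\dots,n$. Because $u=\mathcal Q^{-1}B_1$ lies in the kernel of the map $e\mapsto(\hat e_1,\dots,\hat e_n)$, this change of variables eliminates the $\phi_0$-term altogether --- so the possible discontinuity of $\phi_0$ at the origin does no harm to the Filippov solution --- and, after a short computation, all remaining $e_0$-terms cancel too, leaving the autonomous linear subsystem
\begin{equation*}
\dot{\hat e}_i=\hat e_{i+1}-c_{n+1-i}\hat e_1\ \ (1\leq i\leq n-1),\qquad \dot{\hat e}_n=-c_1\hat e_1,
\end{equation*}
that is, $\dot{\hat e}=M\hat e$, valid along any trajectory with $|e_0(t)|\leq\delta_0$. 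The matrix $M$ is of companion type with characteristic polynomial $p(s)/s=\prod_{k=1}^{n}(s+k\alpha)$ (equivalently $\operatorname{tr}(M)=-c_n=-\tfrac{1}{2}\alpha n(n+1)$), so its eigenvalues are $-\alpha,-2\alpha,\dots,-n\alpha$; in particular, for $n\geq 1$, $-\alpha$ is a simple eigenvalue with $-\alpha>-\alpha(n+1)$.

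The third step exploits this ``too slow'' mode. By (A1) the origin of~\eqref{Eq:BaseDiff} is globally asymptotically --- hence Lyapunov --- stable, so I can choose $\varepsilon>0$ small enough that $\|e(0)\|\leq\varepsilon$ forces $|e_0(t)|\leq\delta_0$ for all $t\geq 0$; then the subsystem above holds for all $t\geq 0$. Picking $e(0)$ with $e_0(0)=0$ and $(e_1(0),\dots,e_n(0))$ a small nonzero eigenvector of $M$ for the eigenvalue $-\alpha$ gives $\hat e(0)\neq 0$ and $\hat e(t)=\mathrm{e}^{-\alpha t}\hat e(0)$ for all $t\geq 0$; since $\hat e$ is a fixed linear image of $e$, we get $\|e(t)\|\geq c'\,\mathrm{e}^{-\alpha t}$ with $c'>0$, which as $t\to\infty$ contradicts the bound $\|e(t)\|<\gamma(e(0),\alpha)\,\mathrm{e}^{-\alpha(n+1)t}$ of (A1) because $-\alpha>-\alpha(n+1)$. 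Hence no such $\delta_0$ exists, which is the claim.

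I expect the crux to be the change of coordinates $\hat e_i=e_i-c_{n+1-i}e_0$ of the second step: one must notice that it removes the (possibly discontinuous) nonlinearity $\phi_0$ and every explicit $e_0$-term at once, and then recognize that the resulting closed linear subsystem is necessarily driven at the slow rate $\alpha$, strictly below the rate $\alpha(n+1)$ that (A1) imposes on the base differentiator. Once that structural fact is in place, computing $\mathcal Q^{-1}B_1$ and dealing with the Filippov solutions and the region $|e_0|\leq\delta_0$ are routine.
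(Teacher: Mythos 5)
Your proof is correct, and it reaches the contradiction with (A1) by a genuinely different decomposition than the paper's. The paper stays in matrix form: substituting the assumed identity into the auxiliary system \eqref{Eq:AuxSyst} with $\pi\equiv 0$ and using $\mathcal{Q}\mathcal{U}=(\mathcal{U}-\alpha\mathcal{D})\mathcal{Q}+\mathcal{Q}\mathcal{A}$ together with $B_{n+1}^T\mathcal{U}=0$, it shows that the single scalar $v(\tau)=B_{n+1}^T\mathcal{Q}\chi(\tau)$ obeys $\frac{dv}{d\tau}=-\alpha n\,v$, hence decays at rate $n\alpha<(n+1)\alpha$. You instead compute $\mathcal{Q}^{-1}B_1$ explicitly via the characteristic polynomial of $\mathcal{U}-\alpha\mathcal{D}$, reduce the hypothesis to $n$ scalar identities tying each $\phi_i$ to $\phi_0$, and exhibit an $n$-dimensional autonomous linear quotient $\dot{\hat e}=M\hat e$ with spectrum $\{-\alpha,\dots,-n\alpha\}$, contradicting (A1) through the slowest mode $-\alpha$ (the paper's scalar $v$ is, in effect, the fastest of your modes; either suffices). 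Your longer route buys several things the paper leaves implicit: an explicit trajectory that actually excites the offending mode (confined to $\abs{e_0}\le\delta_0$ by Lyapunov stability and initialized on the $-\alpha$ eigenvector with $e_0(0)=0$), an explicit treatment of the Filippov issue at $e_0=0$, and the observation that the claim fails for $n=0$ --- indeed the paper's chain of equalities silently uses $B_{n+1}^TB_1=0$, i.e.\ $n\ge1$, and the lemma is only invoked for $n>0$ in Proposition~\ref{Lemma:Uniform}, so your restriction is the right reading. One small caveat: your closing inequality ``$-\alpha>-\alpha(n+1)$'' requires $\alpha>0$; this is guaranteed since Theorem~\ref{Th:MainResult} takes $\alpha$ positive, and even in the degenerate case $\alpha=0$ your lower bound $\|e(t)\|\ge c'>0$ would still contradict the asymptotic stability demanded by (A1).
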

\begin{proof}
Assume the opposite, i.e., that there exists $\delta_0$ such that $\gamma(w)$ as in the lemma exists for all non-zero $w \in [-\delta_0, \delta_0]$.
We will show that this implies existence of trajectories of \eqref{Eq:AuxSyst} with $\pi(\tau) = 0$ that satisfy $\frac{\mathrm{d}}{\mathrm{d}\tau} \mathbf{b}_{n}^T \mathbf{Q} \mathbf{\chi}(\tau) = - \alpha n \mathbf{b}_{n}^T \mathbf{Q} \mathbf{\chi}(\tau)$, contradicting the convergence bound in the admissibility requirement (A1).
To see this, note that for trajectories satisfying $\abs{\chi_0(\tau)} \le \delta_0$ one has with $\mathbf{A}$ as in the proof of Lemma~\ref{Lemma:TimeScale}:
\begin{align}
    \frac{\mathrm{d}}{\mathrm{d}\tau} \mathbf{b}_{n}^T \mathbf{Q} \mathbf{\chi}&= \mathbf{b}_{n}^T \mathbf{Q} (- \Phi(\chi_0) + \mathbf{U} \mathbf{\chi}) \\
    &= \mathbf{b}_{n}^T \mathbf{Q} (\gamma \mathbf{Q}^{-1} \mathbf{b}_{0} - \chi_0 \mathbf{U} \mathbf{Q}^{-1} \mathbf{b}_{0} + \mathbf{U} \mathbf{\chi}) \\
    &= \mathbf{b}_{n}^T \mathbf{Q} \mathbf{U} \mathbf{Q}^{-1} (\mathbf{Q} \mathbf{\chi} - \chi_0 \mathbf{b}_{0})\\
    &= \mathbf{b}_{n}^T ( \mathbf{U} - \alpha \mathbf{D} + \mathbf{Q} \mathbf{A} \mathbf{Q}^{-1})(\mathbf{Q} \mathbf{\chi} - \chi_0 \mathbf{b}_{0}) \\
    &= - \alpha \mathbf{b}_{n}^T \mathbf{D} \mathbf{Q} \mathbf{\chi},
\end{align}
because $\mathbf{b}_{n}^T \mathbf{U} = 0$ and $\mathbf{A} (\mathbf{\chi} - \chi_0 \mathbf{Q}^{-1} \mathbf{b}_{0}) = 0$. Finally, use $\mathbf{b}_{n}^T\mathbf{D}=n\mathbf{b}_{n}^T$.
\end{proof}

\begin{proof}(Of Proposition \ref{Lemma:Uniform})
In the following, we  consider $n>0$ and start by showing that a bounded $\kappa(t)$, $\forall t\geq 0$ implies uniform Lyapunov stability of \eqref{Eq:DiffErr}. First, note that \eqref{Eq:DiffErr} with $\{h_i\}_{i=0}^n$ as in Theorem~\ref{Th:MainResult} is time invariant on the interval $(T_c, \infty)$.
Hence, it is sufficient to show uniform Lyapunov stability on the interval $[0, T_c)$.
Thus, let $0\leq s\leq t< T_c$ and recall the relation between $\mathbf{e}(t)$ and $\mathbf{\chi}(\varphi(t))$ in \eqref{eq:change_of_coordinates}. Note that by the Rayleigh inequality \cite[Theorem 4.2.2]{Horn2012MatrixAnalysis} we have that 
\begin{equation}
\label{eq:rayleigh}
\beta\underline{\sigma}(\mathbf{Q})\underline{\sigma}(\mathbf{K}(t))\|\mathbf{\chi}(\varphi(t))\|\leq \|\mathbf{e}(t)\|\leq\beta\overline{\sigma}(\mathbf{Q})\overline{\sigma}(\mathbf{K}(t))\|\mathbf{\chi}(\varphi(t))\|
\end{equation}
where $\underline{\sigma}(\bullet), \overline{\sigma}(\bullet)$ denote minimum and maximum singular values respectively. In addition, note that $0<\underline{\sigma}(\mathbf{K}(t))=\min\{1,\kappa(t),\dots,\kappa(t)^n\}$ and $0<\overline{\sigma}(\mathbf{K}(t))=\max\{1,\kappa(t),\dots,\kappa(t)^n\}<+\infty$ are non decreasing functions since $\kappa(t)$ is increasing for $t\in[0,T_c)$. Choose any $\epsilon>0$ and let $\epsilon_\chi = \epsilon\left(\beta\overline{\sigma}(\mathbf{Q})\overline{\sigma}(\mathbf{K}(T_c)\right)^{-1}$. Note that $\epsilon_\chi>0$ since $\overline{\sigma}(\mathbf{K}(T_c))<+\infty$. For such $\epsilon_\chi>0$, there exists $\delta_\chi>0$ such that $\|\mathbf{\chi}(\varphi(s))\|\leq \delta_\chi$ implies $\|\mathbf{\chi}(\varphi(t))\|\leq \epsilon_\chi$ for $\varphi(t)\geq \varphi(s)$ due to Lyapunov stablility and time invariance (and hence, uniform Lyapunov stability) of \eqref{Eq:BaseDiff}. Thus, let $\delta = \delta_\chi\beta\underline{\sigma}(\mathbf{Q})\underline{\sigma}(\mathbf{K}(0))$ which is independent of $s$ and satisfies $\delta\leq \delta_\chi\beta\underline{\sigma}(\mathbf{Q})\underline{\sigma}(\mathbf{K}(s))$, $\forall s\in[0,T_c)$. Hence, using the first inequality in \eqref{eq:rayleigh} it is obtained that $\|\mathbf{e}(s)\|\leq \delta\leq \delta_\chi\beta\underline{\sigma}(\mathbf{Q})\underline{\sigma}(\mathbf{K}(s))$ implies $\|\mathbf{\chi}(\varphi(s))\|\leq \delta_\chi$.
This in turn implies $\|\mathbf{\chi}(\varphi(t))\|\leq \epsilon_\chi$.
Using the second inequality in \eqref{eq:rayleigh} and the fact that $\overline{\sigma}(\mathbf{K}(t)) \leq \overline{\sigma}(\mathbf{K}(T_c))$ for all $t \in [0, T_c)$, we obtain  $\|\mathbf{e}(t)\|\leq \beta\overline{\sigma}(\mathbf{Q})\overline{\sigma}(\mathbf{K}(T_c)) \epsilon_{\chi} = \epsilon$, proving Lyapunov stability of \eqref{Eq:DiffErr} on the time interval $[0, T_c)$.

Now, we show that if $\kappa(t)$ is not bounded, then \eqref{Eq:DiffErr} is not uniformly Lyapunov stable. In particular, we will show that for any $\delta,\epsilon>0$, there exist $s,t$ with $0\leq s<t\leq T_c$ and a trajectory $e$ of \eqref{Eq:DiffErr} which satisfies both $\|\mathbf{e}(s)\|\leq \delta$ and $\|\mathbf{e}(t)\|> \epsilon$. Consider, for fixed $\delta$, arbitrary $\tau_0\ge 0$ and $\pi(\tau)=0$, any trajectory $\mathbf{\chi}(\tau)$ of \eqref{Eq:AuxSyst} with $\mathbf{\chi}(\tau_0) = w\mathbf{Q}^{-1}\mathbf{b}_{0}$ with non-zero constant $w$ as in Lemma \ref{lem:deltas} for $\delta_0 = \beta^{-1} \delta$ and $\mathbf{b}_{i}$ as given in the notation section. Now, we show that there is no real number $\gamma$ such that $\left.\frac{\mathrm{d}}{\mathrm{d}\tau}\mathbf{Q}\mathbf{\chi}(\tau)\right|_{\tau=\tau_0}= \gamma \mathbf{b}_{0}$ for this trajectory. Assume the opposite, which implies that
$\left.\frac{\mathrm{d}\mathbf{\chi}}{\mathrm{d}\tau}\right|_{\tau=\tau_0} =  -\Phi(w) +w\mathbf{U}\mathbf{Q}^{-1}\mathbf{b}_{0}=\gamma \mathbf{Q}^{-1}\mathbf{b}_{0}$. However, this is impossible due to Lemma \ref{lem:deltas} and we conclude that, for this trajectory, $\left.\frac{\mathrm{d}}{\mathrm{d}\tau}\mathbf{Q}\mathbf{\chi}(\tau)\right|_{\tau=\tau_0}\neq \gamma \mathbf{b}_{0}$ for any real $\gamma$. Therefore, there is at least one $i\in\{1,\dots,n\}$ such that $\left.\frac{\mathrm{d}}{\mathrm{d}\tau}\mathbf{b}_{i}^T\mathbf{Q}\mathbf{\chi}(\tau)\right|_{\tau=\tau_0}$ is non zero. The previous argument, in addition to the fact that \eqref{Eq:AuxSyst} is time-invariant and $\{\phi_i\}_{i=0}^n$ are continuous at $\mathbf{\chi}(\tau_0)$, implies that there exist positive constants $\tilde{\tau}$, $\tilde{\epsilon}$, which only depend on $\delta$, such that
$\mathbf{b}_{i}^T \mathbf{Q} \mathbf{\chi}(\tau_0 + \tilde{\tau})| > \tilde{\epsilon}$.
Select now $s\geq 0$ such that $\beta \kappa(s)^{i-1} \tilde{\epsilon} > \epsilon$ which is possible since $\kappa(\bullet)$ is unbounded and set $\tau_0 = \varphi(s)$. From \eqref{eq:change_of_coordinates}, one then obtains
\begin{align}
    \mathbf{e}(s) &= \beta \mathbf{K}(s) \mathbf{Q} \mathbf{\chi}(\varphi(s))= \beta \mathbf{K}(s) \mathbf{Q} \mathbf{\chi}(\tau_0)\\ &= \beta w \mathbf{K}(s) \mathbf{b}_{0} = \beta w \mathbf{b}_{0}
\end{align}
and consequently $\|\mathbf{e}(s)\|= \beta |w| \le \beta \delta_0 = \delta$.
Moreover, one has for $t = \varphi^{-1}(\varphi(s)+\tilde{\tau}) <T_c$
\begin{align}
    e_i(t) &= \mathbf{b}_{i}^T \mathbf{e}(t)= \beta \kappa(t)^{i-1} \mathbf{b}_{i}^T \mathbf{Q} \mathbf{\chi}(\varphi(t)) \\ & = \beta \kappa(t)^{i-1} \mathbf{b}_{i}^T \mathbf{Q} \mathbf{\chi}(\tau_0 + \tilde{\tau})
\end{align}
where $i\in\{1,\ldots,n\}$, and hence $\|\mathbf{e}(t)\| \ge |e_i(t)| \ge \beta \kappa(t)^{i-1} \tilde{\epsilon} \ge \beta \kappa(s)^{i-1} \tilde{\epsilon} > \epsilon$, since $\kappa(t)$ is non decreasing.
\end{proof}

\begin{proof}(Of Proposition \ref{Prop:TightBound})
Due to Lemma \ref{le:settling_time}, the settling time function of the error system \eqref{Eq:DiffErr} is given by \eqref{eq:settling_time}. Let $\mathcal{T}^* := \sup_{\mathbf{\chi}(0) \in \mathbb{R}^{n+1}}\mathcal{T}(\mathbf{\chi}(0))\leq T_f$. Hence, the least \textit{UBST} of \eqref{Eq:DiffErr} is $T^*_{\alpha}=\sup_{\mathbf{e}(0) \in \mathbb{R}^{n+1}}T(\mathbf{e}(0)) = \eta^{-1}T_c(1-\exp{-\alpha \mathcal{T}^*)}$ and thus, the slack $s_\alpha:=T_c-T_\alpha^* = T_c(1 - \eta^{-1}(1-\exp{-\alpha\mathcal{T}^*)} = \sigma(\alpha)T_c$ where $\sigma(\alpha) = 1-(1-\exp{-\alpha\mathcal{T}^*})(1-\exp{-\alpha T_f})^{-1}$. Moreover,  $\sigma(\alpha) \ge 0$ for all $\alpha$ and $\lim_{\alpha\to\infty}\sigma(\alpha) = 0$.
Hence, for every $\varepsilon$ there exists an $\alpha$ such that $\sigma(\alpha) \le T_c^{-1} \varepsilon$ and consequently $s_{\alpha} \le \varepsilon$.
\end{proof}

\begin{proof} (Proof of Proposition \ref{Prop:Bound}) Following from Lemma \ref{le:settling_time}, the settling time function of the error system \eqref{Eq:DiffErr} is given by \eqref{eq:settling_time}. For item (i), since \eqref{Eq:AuxSyst} is finite-time stable thus, we set $T_f=+\infty$ and $\eta=1$. Hence, $T(\mathbf{e}(0))< T_c$ for any finite $\|\mathbf{\chi}(0)\|$, leading to $\kappa(T(\mathbf{e}(0)))<+\infty$. For item (ii), since $\mathcal{T}(\mathbf{\chi}(0))\leq \hat{T}$ for some $\hat{T}<\infty$ then, $T(\mathbf{e}(0)) \leq T_{\max}:= \eta^{-1}T_c(1-\exp{-\alpha \hat{T}})<T_c$ regardless of $\mathbf{e}(0)$. Henceforth, $\sup_{\mathbf{e}(0)\in\mathbb{R}^{n+1}}\kappa(T(\mathbf{e}(0)))<\infty$. Finally, if we have $\mathcal{T}(\mathbf{\chi}(0))\leq T_f<\infty$ then, $T(\mathbf{e}(0)) \leq
\eta^{-1}T_c(1-\exp{-\alpha T_f}) = T_c$ independently of $\mathbf{\chi}(0)$. Hence, it can be obtained that $\kappa_{\max} \equiv \kappa(T_c)$ and  $\kappa(t)\leq \kappa_{\max}< \infty$ for $t\in [0,T_c)$.
\end{proof}

\end{document}